\theoremstyle{plain}
\newtheorem{thm}{\bf Theorem}[section]
\newtheorem{lem}[thm]{\bf Lemma}
\newtheorem{cor}[thm]{\bf Corollary}
\theoremstyle{definition}
\newtheorem{defn}[thm]{\bf Definition}
\theoremstyle{remark}
\newtheorem{rem}[thm]{\bf Remark}
\newtheorem{exam}[thm]{\bf Example}
\theoremstyle{example}
\def \supp{{\mathrm{supp}}}
\def \Supp{{\mathrm{Supp}}}
\def \s{{\mathrm{star}}}
\def \lk{\mathrm{lk}}
\def \reg{\mathrm{reg}}
\def \x{\mathbf x}
\def \a{\mathbf a}
\def \b{\mathbf b}
\def \1{\mathbf 1}
\def \pd{\mathrm{pd}}
\def \Ass{\mathrm{Ass}}
\def \Min{\mathrm{Min}}
\def \pd{\mathrm{pd}}
\def \Min{\mathrm{Min}}
\def \lk{\mathrm{link}}
\def \Del{\Delta}
\def \NN{\mathbb N}
\def \ZZ{\mathbb Z}
\def \fm{\mathfrak m}
\def \F{\mathcal F}
\def \T{\mathcal T}
\begin{document}

\title[]{$k$-clean monomial ideals}
\author{Rahim Rahmati-Asghar}

\keywords{$k$-clean, monomial ideal, Stanley-Reisner ideal, Cohen-Macaulay, matroid}

\subjclass[2010]{Primary: 13F55; 13P10, Secondary: 05E40}

\begin{abstract}
In this paper, we introduce the concept of $k$-clean monomial ideals as an extension of clean monomial ideals and present some homological and combinatorial properties of them. Using the hierarchal structure of $k$-clean ideals, we show that a $(d-1)$-dimensional simplicial complex is $k$-decomposable if and only if its Stanley-Reisner ideal is $k$-clean, where $k\leq d-1$. We prove that the classes of monomial ideals like monomial complete intersection ideals, Cohen-Macaulay monomial ideals of codimension 2 and symbolic powers of Stanley-Reisner ideals of matroid complexes are $k$-clean for all $k\geq 0$.
\end{abstract}

\maketitle

\section*{Introduction}

Let $R$ be a Noetherian ring and $M$ be a finitely generated $R$-module. It is
well known that there exists a so called prime filtration
$$\F:0=M_0\subset M_1\subset \ldots\subset M_{r-1}\subset M_r=M$$
that is such that $M_i/M_{i-1}\cong R/P_i$ for some $P_i\in\Supp(M)$. We call any such
filtration of M a \textbf{prime filtration}. Set
$\Supp(\F)=\{P_1,\ldots,P_r\}$. Let $\Min(M)$ denote the set of minimal prime ideals in $\Supp(M)$. If $I$ is an ideal of $R$ then we set $\min(I)=\Min(R/I)$. Dress \cite{Dr} calls a prime filtration $\F$ of $M$ \textbf{clean} if $\Supp(\F)=\Min(M)$. The module $M$ is called clean, if $M$ admits a clean filtration and $R$ is clean if it is a clean module over itself.

Let $S=K[x_1,\ldots,x_n]$ be the polynomial ring in $n$ indeterminate over a field $K$. Let $\Del$ be a simplicial complex on the vertex set $[n]=\{1,2,\ldots,n\}$. Dress \cite{Dr} showed that $\Del$ is (non-pure) shellable in the sense of Bj\"{o}rner and Wachs \cite{BjWa}, if and only if the Stanley-Reisner ring $S/I_\Del$ is clean. The result of Dress is, in fact, the algebraic counterpart of shellability for simplicial complexes. Some subclasses of shellable complexes are $k$-decomposable simplicial complexes which were introduced by Billera and Provan \cite{BiPr} on pure simplicial complexes and then by Woodroofe \cite{Wo} on not necessarily pure ones. Simon in \cite{Si} introduced ``completed clean ideal trees'' as an algebraic counterpart of pure $k$-decomposable complexes. Actually, in the sense of Simon, the Stanley-Reisner ideal of a $k$-decomposable complex is completed clean ideal tree.

Let $I\subset S$ be a monomial ideal. We call $I$ Cohen-Macaulay (clean) if the quotient ring $S/I$ has this property. In this paper, we define the concept of $k$-clean monomial ideals. The class of $k$-clean monomial ideals are, actually, subclass of clean monomial ideals. It is the aim of this paper to study the properties of $k$-clean monomial ideals and describe relations between these ideals and $k$-decomposable simplicial complexes. Moreover, some classes of $k$-clean monomial ideals are introduced. Also, some results of \cite{BaSo,HeSoYa} are extended.

In Section 2, we introduce $k$-clean monomial ideals. We show that $k$-clean monomial ideals are clean and, also, every clean monomial ideal is $k$-clean for some $k \geq 0$ (see Theorem \ref{k-clean thm}). In Section 3, we discuss some of basic properties of $k$-clean ideals. Some homological invariants of $k$-clean monomial ideals like depth and Castelnuovo-Mumford regularity are described in this section. In the fourth section, we show that a $(d-1)$-dimensional simplicial complex $\Del$ is $k$-decomposable if and only if its associated Stanley-Reisner ideal is $k$-clean, where $k\leq d$ (see Theorem \ref{k-dec k-cl}). The last section is devoted to presenting some examples of $k$-clean monomial ideals. We show that irreducible monomial ideals and monomial complete intersection ideals are $k$-clean, for all $k\geq 0$ (see Theorems \ref{primary} and \ref{comp inter}). Then by showing that Cohen-Macaulay monomial ideals of codimension 2 (see Theorem \ref{CM cod2}) are $k$-clean, we improve Proposition 1.4. of \cite{HeSoYa}. Finally, in Theorem \ref{matroid}, we show that symbolic powers of Stanley-Reisner ideals of matroid complexes are $k$-clean for all $k\geq 0$. In this way, we improve Theorem 2.1 of \cite{BaSo}.

\section{Preliminaries}

Let $\Del$ be a simplicial complex of dimension $d-1$ with the vertex set $[n]:=\{1,2,\ldots,n\}$. Let $K$ be a field. The Stanley-Reisner monomial ideal of $\Del$ is denoted by $I_\Del$ and it is a squarefree monomial ideal in the polynomial ring $S=K[x_1,\ldots,x_n]$ generated by the monomials $\x^F=\underset{i\in F}{\prod}x_i$ which $F$ is a non-face in $\Del$. The quotient ring $S/I_\Del$ is called the \textbf{face ring} or \textbf{Stanley-Reiner ring} of $\Del$. If $\F(\Del)=\{F_1,\ldots,F_r\}$ is the set of maximal faces (facets) of $\Del$ then we set $\Del=\langle F_1,\ldots,F_r\rangle$.

For all undefined terms or notions on simplicial complexes we refer the reader to the books \cite{HeHi} or \cite{St}.

Given a simplicial complex $\Del$ on $[n]$, the \textbf{link}, \textbf{star} and \textbf{deletion} of $\sigma$ in $\Del$ are defined, respectively, by
$$\begin{array}{l}
  \lk_\Del(\sigma)=\{F\in\Del: \sigma\cap F=\emptyset, \sigma\cup F\in\Del\}, \\
  \s_\Del(\sigma)=\{ F\in\Del:\sigma\cup F\in\Del\}\ \mbox{and}\\
  \Del\backslash \sigma=\{F\in\Del:\sigma\nsubseteq F\}.
\end{array}$$
Moreover, the \textbf{Alexander dual} of $\Del$ is defined as $\Del^\vee=\{F\in\Del:[n]\backslash F\not\in\Del\}$.

Let $I\subset S$ be a squarefree monomial ideal generated by monomials of degree at least $2$. Then there exists a simplicial complex $\Del$ on $[n]$ such that $I=I_\Del$. The Alexander dual of $I$ is defined $I^\vee=I_{\Del^\vee}$.

\begin{defn}
\cite{Wo} Let $\Del$ be a simplicial complex on vertex set $[n]$.
Then a face $\sigma\in\Del$ is called a \textbf{shedding face} if
it satisfies the following property:
\begin{center}
   no facet of $(\s_\Del\sigma)\backslash\sigma$ is a facet of $\Del\backslash\sigma$.
\end{center}
\end{defn}

\begin{defn}\label{k-decom simcomplex}
\cite{Wo} A $(d-1)$-dimensional simplicial complex $\Del$ is recursively defined to be
\textbf{$k$-decomposable} if either $\Del$ is a simplex or else has a
shedding face $\sigma$ with $\dim(\sigma)\leq k$ such that both
$\lk_\Del\sigma$ and $\Del\backslash\sigma$ are
$k$-decomposable.

We consider the complexes $\{\}$ and
$\{\emptyset\}$ to be $k$-decomposable for $k\geq -1$. Also $k$-decomposability implies to $k'$-decomposability for $k'\geq k$.

A $0$-decomposable simplicial complex is called \textbf{vertex-decomposable}.
\end{defn}

We say that the simplicial complex
$\Del$ is (non-pure) \textbf{shellable} if its facets can be
ordered $F_1,F_2,\ldots,F_r$ such that, for all $r\geq 2$, the
subcomplex $\langle F_1,\ldots,F_{j-1}\rangle\cap \langle
F_j\rangle$ is pure of dimension $\dim(F_j)-1$ \cite{BjWa}. It was shown in \cite{Wo} or \cite{Jo} that a $(d-1)$-dimensional (not necessarily pure) simplicial complex $\Del$ is shellable if and only if it is $(d-1)$-decomposable.

Let $I$ be a monomial ideal of $S$. We denote by $G(I)$ the set of minimal monomial generators of $I$. Let $\min(I)$ be the set of minimal (under inclusion) prime ideals of $S$ containing $I$.

For $\a\in\NN^n$, set $\x^\a=\underset{\a(i)>0}{\prod}x^{\a(i)}_i$ and define the \textbf{support} of $\a$ by $\supp(\a)=\{i:\a(i)>0\}$. We set $\supp(\x^\a):=\supp(\a)$. Also, we define $\bar{\a}$ an $n$-tuple in $\{0,1\}^n$ with $\bar{\a}(i)=1$ if $\a(i)\neq 0$ and $\bar{\a}(i)=0$, otherwise.
Set $\nu_i(\x^\a):=\a(i)$.

Let $u,v\in S$ be two monomials. We set $[u,v]=1$ if for all
$i\in\supp(u)$, $x^{a_i}_i\nmid v$ and $[u,v]\neq 1$, otherwise.

For the monomial $u\in S$ and the monomial ideal $I\subset S$ set
\begin{center}
$I^u=\langle v\in G(I): [u,v]\neq 1\rangle$\quad and\quad $I_u=\langle
v\in G(I): [u,v]=1\rangle.$
\end{center}

\begin{defn}\label{1}
\cite{RaYa} Let $I$ be a monomial ideal with the minimal system of generators
$\{u_1,\ldots,u_r\}$. The monomial $v=x^{a_1}_1\ldots x^{a_n}_n$ is called \textbf{shedding} if $I_v\neq 0$ and for each $u_i\in G(I_v)$ and each
$l\in\supp(u)$ there exists $u_j\in G(I^v)$ such that $u_j:u_i=x_l$.
\end{defn}

\begin{defn}\label{2}
\cite{RaYa} Let $I$ be a monomial ideal minimally generated with set
$\{u_1,\ldots,u_r\}$. We say $I$ is a \textbf{$k$-decomposable}
ideal if $r=1$ or else has a shedding monomial $v$ with
$|\supp(v)|\leq k+1$ such that the ideals $I^v$ and $I_v$ are
$k$-decomposable. (Note that since the number of minimal
generators of $I$ is finite, the recursion procedure will stop.)

A $0$-decomposable monomial ideal is called \textbf{variable-decomposable}.
\end{defn}

\begin{thm}\label{Alex dual k-decom}
\cite[Theorem 2.10.]{RaYa} Let $\Del$ be a (not necessarily pure) $(d-1)$-dimensional simplicial
complex on vertex set $[n]$. Then $\Del$ is $k$-decomposable if and only if $I_{\Del^\vee}$ is $k$-decomposable, where $k\leq d-1$.
\end{thm}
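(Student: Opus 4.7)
The plan is to prove both implications by induction on the number of facets of $\Delta$ (equivalently, on $|G(I_{\Delta^\vee})|$). For the base case $\Delta = \langle F\rangle$ is a simplex; then $I_{\Delta^\vee}$ has the single minimal generator $\x^{[n]\backslash F}$, and both sides are $k$-decomposable for every $k$.

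For the inductive step I would set up a dictionary between faces $\sigma\in\Delta$ and squarefree monomials $v = \x^\sigma$. Since the minimal generators of $I_{\Delta^\vee}$ are precisely $\x^{[n]\backslash F}$ for $F$ a facet of $\Delta$, and $[v,\x^{[n]\backslash F}]=1$ iff $\sigma\cap([n]\backslash F)=\emptyset$ iff $\sigma\subseteq F$, one obtains
\begin{equation*}
(I_{\Delta^\vee})_v = \langle \x^{[n]\backslash F} : F \text{ facet},\ \sigma\subseteq F\rangle,\qquad (I_{\Delta^\vee})^v = \langle \x^{[n]\backslash F} : F \text{ facet},\ \sigma\not\subseteq F\rangle.
\end{equation*}
The first ideal is $I_{(\lk_\Delta\sigma)^\vee}$ (working in $K[x_i : i\notin\sigma]$), since the facets of $\lk_\Delta\sigma$ on $[n]\backslash\sigma$ are $F\backslash\sigma$ with $\sigma\subseteq F$ a facet of $\Delta$, whose complements in $[n]\backslash\sigma$ are $[n]\backslash F$.

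The main combinatorial step is to match $(I_{\Delta^\vee})^v$ with $I_{(\Delta\backslash\sigma)^\vee}$ and to translate the shedding conditions. I would observe that any facet of $\Delta\backslash\sigma$ that is not a facet of $\Delta$ has the form $F\backslash\{l\}$ with $F$ a facet of $\Delta$ containing $\sigma$ and $l\in\sigma$, and that these are exactly the facets of $(\s_\Delta\sigma)\backslash\sigma$; the shedding hypothesis on $\sigma$ is precisely the statement that such $F\backslash\{l\}$ fail to be facets of $\Delta\backslash\sigma$, which gives the identification $(I_{\Delta^\vee})^v = I_{(\Delta\backslash\sigma)^\vee}$. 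On the algebraic side, one unwinds Definition \ref{1} for $v = \x^\sigma$ with $l\in\supp(v)=\sigma$: it requires that for every facet $F\supseteq\sigma$ and every $l\in\sigma$ there is a facet $F'$ of $\Delta$ with $F\backslash\{l\}\subseteq F'$ and $l\notin F'$. A short argument (using that $\sigma\subseteq F$ forces any $F'$ not containing $\sigma$ but containing $F\backslash\{l\}$ to satisfy $\sigma\backslash F' = \{l\}$) shows this is equivalent to $F\backslash\{l\}$ failing to be a facet of $\Delta\backslash\sigma$, i.e.\ to the shedding face property of Definition \ref{k-decom simcomplex}.

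With the dictionary in place, the forward direction follows immediately: a shedding face $\sigma$ with $\dim\sigma\leq k$ gives a shedding monomial $v$ with $|\supp(v)|\leq k+1$, and the inductive $k$-decomposability of $\lk_\Delta\sigma$ and $\Delta\backslash\sigma$ transfers, via the dictionary, to $(I_{\Delta^\vee})_v$ and $(I_{\Delta^\vee})^v$. The reverse direction is symmetric: since $I_{\Delta^\vee}$ is squarefree the operations $I_v$ and $I^v$ depend only on $\supp(v)$, so a shedding monomial may be taken squarefree, and $\sigma := \supp(v)$ is a face of $\Delta$ (because $I_v\neq 0$ forces some facet to contain $\sigma$). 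The main obstacle I expect is the careful bookkeeping of ambient vertex sets---$\lk_\Delta\sigma$ naturally lives on $[n]\backslash\sigma$ while $\Delta\backslash\sigma$ stays on $[n]$---together with verifying that Definition \ref{1} is a faithful algebraic shadow of Definition \ref{k-decom simcomplex}; both hinge on correctly identifying the facets of $(\s_\Delta\sigma)\backslash\sigma$ and on the shedding hypothesis preventing spurious new facets from appearing in $\Delta\backslash\sigma$.
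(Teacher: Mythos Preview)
The paper does not supply a proof of this statement at all: Theorem~\ref{Alex dual k-decom} is quoted verbatim from \cite[Theorem~2.10]{RaYa} and is used later only as a black box (in Corollary~\ref{k-clean k-decom}). So there is no ``paper's own proof'' to compare your proposal against.

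That said, your outline is the natural one and is essentially how the result is proved in \cite{RaYa}. The dictionary
\[
(I_{\Delta^\vee})_v \;\longleftrightarrow\; I_{(\lk_\Delta\sigma)^\vee},
\qquad
(I_{\Delta^\vee})^v \;\longleftrightarrow\; I_{(\Delta\backslash\sigma)^\vee}
\]
for $v=\x^\sigma$, together with the equivalence ``$\sigma$ is a shedding face $\Leftrightarrow$ $\x^\sigma$ is a shedding monomial'', is exactly the engine of the argument there. One point worth tightening: the second identification $(I_{\Delta^\vee})^v = I_{(\Delta\backslash\sigma)^\vee}$ is \emph{not} a formal identity; it holds precisely when every facet of $\Delta\backslash\sigma$ is already a facet of $\Delta$, which is the content of the shedding condition. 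You state this correctly, but in the reverse direction be sure the logic runs in the right order: first deduce from the shedding-monomial condition on $v$ that $\sigma$ is a shedding face, and only then invoke the identification to feed the inductive hypothesis. Your remark about replacing $v$ by a squarefree monomial with the same support, and about $I_v\neq 0$ forcing $\sigma\in\Delta$, is the right way to start that direction.
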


\begin{defn}
\cite{MoMo} A monomial ideal $I$ is called \textbf{weakly polymatroidal} if
for every two monomials
$u=x^{a_1}_1\ldots x^{a_n}_n>_{lex}v=x^{b_1}_1\ldots x^{b_n}_n$
in $G(I)$ such that $a_1=b_1,\ldots,a_{t-1}=b_{t-1}$ and
$a_t>b_t$, there exists $j>t$ such that $x_t(v/x_j)\in I$.
\end{defn}

\begin{thm}\label{w.p. v.d.}
\cite[Theorem 4.33.]{Sh} Every weakly polymatroidal ideal $I$ is variable-decomposable.
\end{thm}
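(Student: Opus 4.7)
The plan is to proceed by induction on $r = |G(I)|$. The base case $r = 1$ is trivial. For $r \geq 2$, let $t$ be the smallest index such that $\nu_t$ is non-constant on $G(I)$ (so every generator of $I$ agrees on coordinates $1, \ldots, t-1$), and set $a = \min_{u \in G(I)} \nu_t(u)$. I claim that $v = x_t^{a+1}$, which has $|\supp(v)| = 1$, is a shedding monomial for $I$ and that both $I_v$ and $I^v$ are again weakly polymatroidal; the inductive hypothesis then yields variable-decomposability of $I_v$ and $I^v$, and hence of $I$.

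By construction $G(I_v) = \{u \in G(I): \nu_t(u) = a\}$ and $G(I^v) = \{u \in G(I): \nu_t(u) \geq a+1\}$ are both nonempty proper subsets of $G(I)$. To verify shedding at $u_i \in G(I_v)$, pick any $u^* \in G(I^v)$: then $u^* >_{\lex} u_i$ with first differing index $t$, so the weakly polymatroidal property furnishes $j > t$ with $x_t(u_i/x_j) \in I$. A minimal generator $h$ dividing $x_t(u_i/x_j)$ satisfies $\nu_t(h) \leq a+1$; if $\nu_t(h) \leq a$ then $\nu_t(h) = a$ by minimality of $a$, and $h$ divides $u_i$ strictly at coordinate $j$, contradicting $u_i \in G(I)$. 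Hence $\nu_t(h) = a+1$, so $h \in G(I^v)$ and $h : u_i = x_t$. That $I_v$ is weakly polymatroidal is immediate: two elements of $G(I_v)$ first differ at some index $s > t$, and any generator of $I$ dividing the corresponding $x_s(u'/x_j)$ automatically has $\nu_t = a$ by minimality.

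The main obstacle is showing $I^v$ weakly polymatroidal. Take $u, u' \in G(I^v)$ first differing at $s \geq t$, let $j > s$ satisfy $x_s(u'/x_j) \in I$, and let $g \in G(I)$ divide it. If $\nu_t(g) \geq a+1$ we are done; otherwise $\nu_t(g) = a$. The case $s = t$ is dispatched as in the shedding argument, since such a $g$ would then divide $u'$ properly. For $s > t$ we have $\nu_t(u) = \nu_t(u') =: c \geq a+1$, and the minimality of $u'$ forces $\nu_s(g) = \nu_s(u')+1$. Now apply the weakly polymatroidal property to the pair $u >_{\lex} g$, which first differs at coordinate $t$, to obtain $j' > t$ with $x_t(g/x_{j'}) \in I$. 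A direct calculation rules out $j' = s$: one finds $x_t g/x_s = u'/m$ with $m$ a nontrivial monomial divisor of $u'$, whence any generator of $I$ dividing this would divide $u'$ strictly. If $j' < s$, any minimal generator $h$ of $I$ dividing $x_t(g/x_{j'})$ must have $\nu_t(h) = a+1$ (else $h$ would divide $g$ strictly), and a coordinate-by-coordinate check shows $h \mid x_s(u'/x_j)$, hence $x_s(u'/x_j) \in I^v$. If $j' > s$, the analogous check gives $h \mid x_s(u'/x_{j'})$, so the weakly polymatroidal property of $I^v$ is witnessed by $j'$ in place of $j$.

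With $I_v$ and $I^v$ both weakly polymatroidal and having strictly fewer generators than $I$, the inductive hypothesis yields their variable-decomposability, and combined with the shedding property of $v$ this establishes that $I$ is variable-decomposable.
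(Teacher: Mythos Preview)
The paper does not supply its own proof of this theorem; it is quoted verbatim as \cite[Theorem 4.33]{Sh} and used as a black box. So there is nothing in the paper to compare your argument against.

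That said, your direct inductive argument is essentially sound. The choice of $v=x_t^{a+1}$ with $t$ the first non-constant coordinate and $a$ the minimum of $\nu_t$ on $G(I)$ is the natural one, and your verification that $v$ is shedding and that $I_v$ remains weakly polymatroidal is clean. The delicate part---showing $I^v$ is weakly polymatroidal in the sub-case $s>t$, $\nu_t(g)=a$---is handled correctly: you use minimality of $u'$ to pin down $\nu_s(g)=\nu_s(u')+1$, then apply the exchange property to $u>_{\lex}g$ (first difference at $t$) to produce $j'>t$, and the three sub-sub-cases $j'=s$, $j'<s$, $j'>s$ each close by a coordinatewise divisibility check. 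One small point worth making explicit in the write-up: when you pass to a new witness $j'$ in the last sub-case, you should note that $x_{j'}\mid u'$, which follows from $x_{j'}\mid g$ and $g\mid x_s u'/x_j$. With that clarification the argument is complete and gives a self-contained proof of the cited result.
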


\section{$k$-clean monomial ideals}

In this section we extend the concept of cleanness introduced by Dress \cite{Dr}. Let $I\subset S$ be a monomial ideal. A prime filtration
$$\F:(0)=M_0\subset M_1\subset \ldots\subset M_{r-1}\subset M_r=S/I$$
of $S/I$ is called \textbf{multigraded}, if all $M_i$ are multigraded submodules of $S/I$, and if there are multigraded isomorphisms $M_i/M_{i-1}\cong S/P_i(-\a_i)$ with some $\a_i\in\ZZ^n$ and some multigraded prime ideals $P_i$.

A multigraded prime filtration $\F$ of $S/I$ is called \textbf{clean} if $\Supp(\F)\subseteq\min(I)$.

\begin{defn}
Let $I\subset S$ be a monomial ideal. A non unit monomial $u\not\in I$ is called a \textbf{cleaner} monomial of $I$ if $\min(I+Su)\subseteq\min(I)$.
\end{defn}

\begin{defn}
Let $I\subset S$ be a monomial ideal. We say that $I$ is \textbf{$k$-clean} whenever $I$ is a prime ideal or $I$ has no embedded prime ideals and there exists a cleaner monomial $u\not\in I$ with $|\supp(u)|\leq k+1$ such that both $I:u$ and $I+Su$ are $k$-clean.
\end{defn}

We recall the concept of ideal tree from \cite{Si}:

Let $I\subset S$ be a $k$-clean monomial ideal. By the definition, there are cleaner monomials $u_1,u_2,\ldots$ with $|\supp(u_i)|\leq k+1$ decomposing $I$. Therefore we obtain the rooted, finite, directed and binary tree $\mathcal{T}$:
$$\begin{tikzpicture}
\coordinate (a) at (0,4);
\coordinate (b) at (-3,3);
\coordinate (b') at (-3,2.5);
\coordinate (c) at (3,3);
\coordinate (c') at (3,2.5);
\coordinate (d) at (-5,2);
\coordinate (e) at (-1,2);
\coordinate (f) at (1,2);
\coordinate (g) at (5,2);
\coordinate (d') at (-5,1.5);
\coordinate (e') at (-1,1.5);
\coordinate (f') at (1,1.5);
\coordinate (g') at (5,1.5);
\coordinate (h) at (-5.75,1);
\coordinate (i) at (-4.25,1);
\coordinate (j) at (-1.75,1);
\coordinate (k) at (-0.25,1);
\coordinate (l) at (0.25,1);
\coordinate (m) at (1.75,1);
\coordinate (n) at (4.25,1);
\coordinate (o) at (5.75,1);

\node[above] at (0,4)  {$I$};
\node at (0,3.5)  {$u_1$};
\node[below] at (-3,3)  {$J_1:=I:u_1$};
\node[below] at (3,3)  {$J_2:=I+Su_1$};
\node[below] at (-5,2)  {$J_1:u_2$};
\node[below] at (-1,2)  {$J_1+Su_2$};
\node[below] at (1,2)  {$J_2:u_3$};
\node[below] at (5,2)  {$J_2+Su_3$};
\node at (-3,2.15)  {$u_2$};
\node at (3,2.15)  {$u_3$};

\node at (-5,1.15)  {$u_4$};
\node at (-1,1.15)  {$u_5$};
\node at (1,1.15)  {$u_6$};
\node at (5,1.15)  {$u_7$};

\node[below] at (-5.75,1) {$\vdots$};
\node[below] at (-4.25,1) {$\vdots$};
\node[below] at (-1.75,1) {$\vdots$};
\node[below] at (-0.25,1) {$\vdots$};
\node[below] at (0.25,1) {$\vdots$};
\node[below] at (1.75,1) {$\vdots$};
\node[below] at (4.25,1) {$\vdots$};
\node[below] at (5.75,1) {$\vdots$};

\draw [->] (a) -- (c);
\draw [->] (a) -- (b);
\draw [->] (b') -- (d);
\draw [->] (b') -- (e);
\draw [->] (c') -- (f);
\draw [->] (c') -- (g);
\draw [->] (d') -- (h);
\draw [->] (d') -- (i);
\draw [->] (e') -- (j);
\draw [->] (e') -- (k);
\draw [->] (f') -- (l);
\draw [->] (f') -- (m);
\draw [->] (g') -- (n);
\draw [->] (g') -- (o);
\end{tikzpicture}$$
$\T$ is called the \textbf{ideal tree} of $I$ and the number of all cleaner monomials appeared in $\T$ is called the \textbf{length} of $\T$. We denote the length of $\T$ by $l(\T)$.

We define the \textbf{$k$-cleanness length} of the $k$-clean monomial ideal $I$ by $$l(I)=\min\{l(\T):\T\ \mbox{is an ideal tree of}\ I\}.$$

\begin{exam}
Consider the monomial ideal
\begin{center}
    $I=(x_1x_2x_4,x_1x_2x_5,x_1x_2x_6,x_1x_3x_5,x_1x_3x_6,x_1x_4x_5,x_2x_3x_6,$\\
    $x_2x_4x_5,x_2x_5x_6,x_3x_4x_5,x_3x_4x_6)$
\end{center}
and
\begin{center}
    $J=(x_1x_2,x_1x_3,x_1x_4)$
\end{center}
of the polynomial ring $S=K[x_1,\ldots,x_6]$. $I$ and $J$ are, respectively, $1$-clean and $0$-clean and have ideal trees $\T_1$ and $\T_2$ such that the cleaner monomials appeared in $\T_1$ and $\T_2$ are, respectively, $x_2x_3,x_1x_4,x_1x_5,x_2x_4,x_2x_5,x_2,x_1,x_3x_6,x_3$ and $x_1$.
\end{exam}

\begin{thm}\label{k-clean thm}
Every $k$-clean monomial ideal $I$ is clean. Also, every clean monomial ideal is $k$-clean for some $k\geq 0$.
\end{thm}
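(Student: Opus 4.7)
The plan is to prove the two implications separately by induction — the forward direction by induction on the ideal tree witnessing $k$-cleanness, and the backward direction by induction on the length $r$ of a clean filtration of $S/I$.

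For $k$-clean $\Rightarrow$ clean, the base case is $I$ prime, where the trivial filtration $0\subset S/I$ is already clean since $\Supp=\{I\}=\min(I)$. Otherwise, by definition there is a cleaner monomial $u\notin I$ with $I{:}u$ and $I+Su$ both $k$-clean, and by induction both are clean. The multigraded short exact sequence
$$0\longrightarrow S/(I{:}u)(-\deg u)\xrightarrow{\,\cdot u\,} S/I\longrightarrow S/(I+Su)\longrightarrow 0$$
then lets me splice the two clean filtrations into a multigraded prime filtration of $S/I$ whose supports lie in $\min(I{:}u)\cup\min(I+Su)$. The inclusion $\min(I+Su)\subseteq\min(I)$ is precisely the defining property of a cleaner monomial. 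For $\min(I{:}u)\subseteq\min(I)$ I use the no-embedded-primes hypothesis: since $S/(I{:}u)\hookrightarrow S/I$ via multiplication by $u$, we have $\Ass(S/(I{:}u))\subseteq\Ass(S/I)=\min(I)$, and because $I{:}u$ is itself $k$-clean it also has no embedded primes, hence $\min(I{:}u)=\Ass(S/(I{:}u))\subseteq\min(I)$.

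For the converse, I fix a clean filtration $0=M_0\subset M_1\subset\cdots\subset M_r=S/I$ with multigraded isomorphisms $M_i/M_{i-1}\cong S/P_i(-\a_i)$ and every $P_i\in\min(I)$. Since $M_1$ is a multigraded cyclic submodule of $S/I$ generated in the single multidegree $\a_1$, and each $\ZZ^n$-graded piece of $S/I$ is at most one-dimensional, one has $M_1=(Su+I)/I$ for the monomial $u=\x^{\a_1}\notin I$. Multiplication by $u$ gives $M_1\cong S/(I{:}u)(-\a_1)$, and comparing with $S/P_1(-\a_1)$ forces $I{:}u=P_1$. So $I{:}u$ is prime (hence $k$-clean for any $k$), while $S/(I+Su)\cong(S/I)/M_1$ inherits the quotient clean filtration of length $r-1$, making $I+Su$ clean. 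Moreover $u$ is a cleaner monomial, since every minimal prime of $I+Su$ lies in $\Ass(S/(I+Su))\subseteq\{P_2,\ldots,P_r\}\subseteq\min(I)$; and $I$ itself has no embedded primes because $\Ass(S/I)\subseteq\{P_1,\ldots,P_r\}\subseteq\min(I)$. By induction on $r$, $I+Su$ is $k_0$-clean for some $k_0$, and then setting $k=\max(k_0,|\supp(u)|-1)$ shows that $I$ is $k$-clean.

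The main obstacle is the support bookkeeping in the forward direction: the no-embedded-primes clause in the definition of $k$-cleanness is exactly what I need to pull $\min(I{:}u)$ down into $\min(I)$, and it must be re-used at every recursive level to keep the spliced filtration clean. A secondary technicality, in the backward direction, is the passage from ``$M_1$ is cyclic multigraded'' to ``$M_1=(Su+I)/I$ for a \emph{monomial} $u$,'' which relies on the monomial nature of $I$ to ensure one-dimensional graded components in $S/I$.
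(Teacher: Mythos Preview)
Your proof is correct and follows essentially the same strategy as the paper: in both directions you induct on the relevant length, and in the forward direction you splice clean filtrations of $S/(I{:}u)$ and $S/(I+Su)$ via the multiplication isomorphism $S/(I{:}u)(-\a)\cong (I+Su)/I$, while in the backward direction you peel off the first step $M_1=(Su+I)/I$ of a clean filtration to produce the cleaner monomial $u$. Your version is in fact tidier than the paper's in two places---you give an explicit reason why $\min(I{:}u)\subseteq\min(I)$ (via $\Ass(S/(I{:}u))\subseteq\Ass(S/I)$ and the no-embedded-primes clause), and you correctly argue $\min(I+Su)\subseteq\{P_2,\ldots,P_r\}\subseteq\min(I)$ from the tail filtration, whereas the paper's line ``$\min(I+Su_1)=\{P_1\}$'' is garbled---and you avoid the external citation to \cite{HePo} by using directly that the multigraded pieces of $S/I$ are one-dimensional.
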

\begin{proof}
Let $I$ be a $k$-clean monomial ideal. We use induction on the $k$-cleanness length of $I$. Let $I$ be not prime and there exists a cleaner monomial $u\not\in I$ of multidegree $\a$ with $|\supp(u)|\leq k+1$ such that both $I:u$ and $I+Su$ are $k$-clean. By induction, $I:u$ and $I+Su$ are clean. Let
$$\F_1:I+Su=J_0\subset J_1\subset\ldots\subset J_r=S$$
and
$$\F_2:0=\frac{L_0}{I:u}\subset \frac{L_1}{I:u}\subset\ldots\subset \frac{L_s}{I:u}=\frac{S}{I:u}.$$
be clean prime filtrations and let $(L_i/I:u)/(L_{i-1}/I:u)\cong S/Q_i(-\a_i)$ where $Q_i$ are prime ideals. It is known that the multiplication map $\varphi:S/I:u(-\a)\overset{.u}{\longrightarrow}I+Su/I$ is an isomorphism. Restricting $\varphi$ to $L_i/I:u$ yields a monomorphism $\varphi_i:L_i/I:u\overset{.u}{\longrightarrow}I+Su/I$. Set $H_i/I:=\varphi_i(L_i/I:u)$. Hence $H_i/I\cong (L_i/I:u)(-\a)$. It follows that
$$\frac{H_i}{H_{i-1}}\cong \frac{H_i/I}{H_{i-1}/I}\cong \frac{(L_i/I:u)(-\a)}{(L_{i-1}/I:u)(-\a)}\cong \frac{S}{Q_i}(-\a-\a_i).$$
Therefore we obtain the following prime filtration induced from $\F_2$:
$$\F_3:I=H_0\subset H_1\subset\ldots\subset H_s=I+Su.$$
By adding $\F_1$ to $\F_3$ we obtain the following prime filtration
$$\F:I=H_0\subset H_1\subset\ldots\subset H_s=I+Su\subset J_1\subset\ldots\subset J_r=S.$$
Finally, $\Supp(\F)=\Supp(\F_1)\cup\Supp(\F_2)\subset\min(I+Su)\cup\min(I:u)\subseteq\min(I)$ and therefore $I$ is clean.

To prove the second assertion, suppose that $I$ is a clean monomial ideal. If $I$ is prime then we are done. Suppose that $I$ is not prime and let
$$\F:(0)=M_0\subset M_1\subset \ldots\subset M_{r-1}\subset M_r=S/I$$
be a clean prime filtration of $S/I$ with $M_i/M_{i-1}\cong S/P_i(-\a_i)$. We use induction on the length of the prime filtration $\F$. Since that $\Ass(S/I)\subseteq\Supp(\F)\subseteq\min(I)$, we have $\Ass(S/I)=\min(I)$. Hence $I$ has no embedded prime ideal. It follows from  Proposition 10.1. of \cite{HePo} that there is a chain of monomial ideals $I=I_0\subset I_1\subset\ldots\subset I_r=S$ and monomials $u_i$ of multidegree $\a_i$ such that $I_i=I_{i-1}+Su_i$ and $I_{i-1}:u_i=P_i$. Since that $I+Su_1$ has a clean filtration, it is $k$-clean, by induction hypothesis, where $|\supp(u_1)|\leq k+1$. On the other hand, $I+Su_1/I\cong S/P_1$. Therefore $\min(I+Su_1)=\{P_1\}\subset\min(I)$. This means that $I$ is $k$-clean.
\end{proof}



\section{Some properties of $k$-clean monomial ideals}

\begin{thm}\label{colon}
Let $I\subset S$ be $k$-clean. Then for all monomial $u\in S$, $I:u$ is $k$-clean.
\end{thm}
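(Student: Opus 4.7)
The plan is to proceed by induction on the cleanness length $l(I)$. When $l(I)=0$ the ideal $I$ is prime (or equal to $S$), and for any monomial $u$ the colon $I:u$ is either $I$ or $S$, both $k$-clean. For the inductive step, fix a cleaner monomial $v\notin I$ with $|\supp(v)|\leq k+1$ such that $I:v$ and $I+Sv$ are $k$-clean of strictly smaller cleanness length; the inductive hypothesis then gives that $(I:v):u'$ and $(I+Sv):u'$ are $k$-clean for every monomial $u'$. Moreover, $I:u$ automatically has no embedded primes, since $\Ass(S/(I:u))\subseteq\Ass(S/I)=\min(I)$ and $I$ (being non-prime and $k$-clean) already lacks embedded primes.

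The engine of the argument is the monomial identity
$$(I+Sv):u \;=\; (I:u) + Sw, \qquad w:=v/\gcd(u,v),$$
which I would verify by a direct check on monomial generators: a monomial $x$ lies in $(I+Sv):u$ iff $xu$ is divisible either by some generator of $I$ (forcing $x\in I:u$) or by $v$ (forcing $w\mid x$, i.e. $x\in Sw$). With this in hand I split into cases. If $u\in I$ then $I:u=S$; if $v\mid u$, write $u=vu'$ so that $I:u=(I:v):u'$ is $k$-clean by induction applied to $I:v$. In the remaining case $w\neq 1$, and I distinguish further: if $\lcm(u,v)\in I$ then $uw\in I$, so $w\in I:u$ and the identity collapses to $I:u=(I+Sv):u$, which is $k$-clean by induction applied to $I+Sv$. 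Otherwise $\lcm(u,v)\notin I$, and I take $w$ itself as the candidate cleaner for $I:u$.

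It remains to verify the cleaner conditions in the last subcase. The conditions $w\notin I:u$ and $|\supp(w)|\leq|\supp(v)|\leq k+1$ are immediate. For $\min((I:u)+Sw)\subseteq\min(I:u)$, the identity gives $(I:u)+Sw=(I+Sv):u$, so $\min((I:u)+Sw)\subseteq\min(I+Sv)\subseteq\min(I)$; any such prime contains $I:u$ and hence must coincide with a prime of $\min(I:u)$ by minimality in $\min(I)$. Finally $(I:u):w=(I:v):(u/\gcd(u,v))$ and $(I:u)+Sw=(I+Sv):u$ are both $k$-clean by the inductive hypothesis, so $I:u$ is $k$-clean. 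The main obstacle I anticipate is the careful handling of the degenerate situation $v\mid u$, where the natural candidate $w$ becomes a unit and cannot serve as a cleaner; this is precisely why that case is separated out and reduced directly via $I:u=(I:v):(u/v)$.
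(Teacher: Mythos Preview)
Your proof is correct and follows essentially the same route as the paper: induction on the $k$-cleanness length, the same case split on whether $v\mid u$, and the same key identities $(I:u)+Sw=(I+Sv):u$ and $(I:u):w=(I:v):(u/\gcd(u,v))$ with $w=v/\gcd(u,v)$. If anything you are more careful than the paper, since you separately treat the subcase $\lcm(u,v)\in I$ (where $w\in I:u$ and hence cannot serve as a cleaner) and you explicitly verify that $I:u$ has no embedded primes; the paper's argument glosses over both points.
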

\begin{proof}
We use induction on the $k$-cleanness length of $I$. If $I$ is prime then $I:u$ is prime, too and we have nothing to prove. Assume that $I$ is not prime. Suppose $v$ is a cleaner monomial of $I$ with $|\supp(v)|\leq k+1$ and $I:v$ and $I+(v)$ are $k$-clean. We consider two cases:

Case 1. Let $v|u$. Then $I:u=(I:v):u/v$ and by induction hypothesis $I:u$ is $k$-clean.

Case 2. Let $v\nmid u$. We show that $v/\gcd(u,v)$ is a cleaner monomial of $I:u$. We have
$$(I:u)+(\frac{v}{\gcd(u,v)})=(I+(v)):u\quad\mbox{and}\quad (I:u):\frac{v}{\gcd(u,v)}=(I:v):\frac{u}{\gcd(u,v)}.$$
By induction, $(I:u)+(\frac{v}{\gcd(u,v)})$ and $(I:u):\frac{v}{\gcd(u,v)}$ are $k$-clean. Since $\min(I+(v))\subset\min(I)$, by some elementary computations, we obtain that $\min((I+(v)):u)\subset\min(I:u)$. Therefore $v/\gcd(u,v)$ is a cleaner monomial of $I:u$.
\end{proof}

\begin{thm}\label{radical}
The radical of each $k$-clean monomial ideal is $k$-clean.
\end{thm}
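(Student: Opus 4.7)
The plan is to argue by strong induction on the pair $(|\min(I)|,\,l(I))$ in the lexicographic order; the base case is $|\min(I)|=1$, where $\sqrt{I}$ is the unique minimal prime of $I$ and hence is itself a prime monomial ideal, trivially $k$-clean.

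For the inductive step, suppose $|\min(I)|\ge 2$, so $I$ is not prime, and pick a cleaner monomial $v$ from an optimal ideal tree of $I$, so that $|\supp(v)|\le k+1$ and both $l(I:v)$ and $l(I+Sv)$ are strictly smaller than $l(I)$. Let $\bar v$ denote the squarefree monomial with $\supp(\bar v)=\supp(v)$. The central identity is $\sqrt{I+Sv}=\sqrt{I}+S\bar v$, which holds because the radical of a monomial ideal is generated by the squarefree parts of its minimal generators. I then split on whether $\bar v\in\sqrt I$. In Case~1, $\bar v\in\sqrt I$ (equivalently, $v$ lies in every minimal prime of $I$), so $\sqrt{I+Sv}=\sqrt I$, and the pair $(|\min(I+Sv)|,l(I+Sv))=(|\min(I)|,l(I+Sv))$ is lexicographically smaller than $(|\min(I)|,l(I))$; the inductive hypothesis applied to $I+Sv$ immediately yields that $\sqrt I$ is $k$-clean.

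In Case~2, $\bar v\notin\sqrt I$, and I propose to use $\bar v$ as a cleaner monomial for $\sqrt I$. The three defining properties are immediate: $\bar v\notin\sqrt I$ is the case hypothesis, $|\supp(\bar v)|=|\supp(v)|\le k+1$, and
$$\min(\sqrt I+S\bar v)=\min(\sqrt{I+Sv})=\min(I+Sv)\subseteq\min(I)=\min(\sqrt I)$$
follows from the cleaner condition on $v$ together with the central identity. For the resulting sum ideal one has $\sqrt I+S\bar v=\sqrt{I+Sv}$, and since $v$ is cleaner but fails to lie in every minimal prime of $I$, the ideal $I+Sv$ has strictly fewer minimal primes than $I$; the inductive hypothesis applied to $I+Sv$ supplies the $k$-cleanness of $\sqrt{I+Sv}$.

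The main obstacle is the colon ideal $\sqrt I:\bar v$, which in general is \emph{not} equal to $\sqrt{I:v}$. My plan is to compute it through the primary decomposition $I=\bigcap_P Q_P$ of the unmixed (since $k$-clean) ideal $I$: for $N$ large enough, $Q_P:v^N=S$ whenever $v\in P$, and $Q_P:v^N=Q_P$ whenever $v\notin P$, so $I:v^N=\bigcap_{v\notin P}Q_P$ and therefore $\sqrt{I:v^N}=\bigcap_{v\notin P}P=\sqrt I:\bar v$. Iterating Theorem~\ref{colon}, $I:v^N$ is $k$-clean, and its number of minimal primes is $|\{P\in\min(I):v\notin P\}|$, which is strictly less than $|\min(I)|$ (using the easy fact that any cleaner monomial must lie in at least one minimal prime of $I$). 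The inductive hypothesis therefore applies to $I:v^N$ and produces the $k$-cleanness of $\sqrt{I:v^N}=\sqrt I:\bar v$, closing Case~2 and completing the induction.
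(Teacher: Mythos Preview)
Your argument is correct, but it is considerably more elaborate than the paper's. The paper proceeds by simple induction on $l(I)$ alone and handles the colon ideal in one line: writing the cleaner monomial as $u=\x^{\b}$ and its squarefree part as $v=\x^{\supp(u)}$, it defines $w$ to be the product of those $x_i$ with $i\in\supp(u)$ for which some minimal generator $\x^{\a_j}$ of $I$ satisfies $\a_j(i)>\b(i)>0$, and then observes the identity
\[
\sqrt{I}:v \;=\; \bigl(\sqrt{I:u}\bigr):w .
\]
Since $\sqrt{I:u}$ is $k$-clean by the (single) inductive hypothesis, Theorem~\ref{colon} immediately gives that $\sqrt{I}:v$ is $k$-clean, with no need to pass to $I:v^N$, count minimal primes, or set up a lexicographic double induction. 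Your detour through $\sqrt{I:v^N}=\sqrt{I}:\bar v$ and the strict drop in $|\min(\,\cdot\,)|$ is a legitimate substitute for that identity, and your case split on whether $\bar v\in\sqrt I$ is a clean way to handle an edge case the paper leaves implicit; but the paper's explicit formula for $w$ is the sharper tool and yields a noticeably shorter proof.
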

\begin{proof}
Let $I=(\x^{\a_1},\ldots,\x^{\a_r})$ be a $k$-clean monomial ideal with cleaner monomial $u=\x^\b$ with $|\supp(u)|\leq k+1$. We use induction on the $k$-cleanness length of $I$. Denote the radical of $I$ by $\sqrt{I}$. By induction hypothesis, $\sqrt{I+Su}$ and $\sqrt{I:u}$ are $k$-clean. Let $v=\x^{\supp(u)}$ and let $w$ be the product of variables $x_i$ with $i\in\supp(u)$ and $\a_j(i)>\b(i)>0$ for some $1\leq j\leq r$. $\sqrt{I}+Sv$ is $k$-clean, because $\sqrt{I}+Sv=\sqrt{I+Su}$. Also, $\sqrt{I}:v=(\sqrt{I:u}):w$ and so $\sqrt{I}:v$ is $k$-clean, by Theorem \ref{colon}. On the other hand, $\min(\sqrt{I}+Sv)\subset\min(\sqrt{I+Su})=\min(I+Su)\subset\min(I)=\min(\sqrt{I})$ and so $v$ is a cleaner monomial of $\sqrt{I}$.
\end{proof}

Let $u=x^{a_1}_{i_1}\ldots x^{a_t}_{i_t}\in S$. The \textbf{polarization} of $u$ is defined by
$$u^p=x_{i_11}\ldots x_{i_1a_1}\ldots x_{i_t1}\ldots x_{i_ta_t}.$$
If $I\subset S$ is a monomial ideal. The polarization of $I$ is a monomial ideal of $S^p=K[x_{ij}:x_{ij}|u^p\ \mbox{for some}\ u\in G(I)]$ given by $I^p=(u^p:u\in G(I))$.

Define the $K$-algebra homomorphism $\pi:S^p\rightarrow S$ by $\pi(x_{ij})=x_i$.

\begin{thm}\label{k-clean embedded}
Let $I$ be a monomial ideal with no embedded prime ideal. If $I^p$ is $k$-clean then $I$ is $k$-clean, too.
\end{thm}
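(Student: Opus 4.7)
The plan is to induct on the $k$-cleanness length $l(I^p)$ of the polarization. For the base case, if $I^p$ is prime then it is generated by variables of $S^p$; since the polarization of a monomial is a single variable only when the monomial itself is a variable of $S$, the ideal $I$ is generated by variables and is itself prime, hence trivially $k$-clean.

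For the inductive step, fix a cleaner monomial $w$ of $I^p$ with $|\supp(w)|\leq k+1$ such that $I^p+S^pw$ and $I^p:w$ are $k$-clean of strictly smaller cleanness length. Because $I^p$ is squarefree I may assume $w$ is squarefree. Define $u=\prod_{i\in\pi(\supp(w))}x_i\in S$; then $|\supp(u)|\leq|\supp(w)|\leq k+1$. The first claim is that $u$ is cleaner for $I$, i.e.\ $\min(I+Su)\subseteq\min(I)$. This rests on the observation, for which the no-embedded-primes hypothesis is crucial, that $\pi$ maps $\min(I^p)$ onto $\min(I)$; a minimal prime $P$ of $I+Su$ can then be lifted to some minimal prime of $I^p+S^pw$ whose $\pi$-image is $P$, which lies in $\min(I^p)$ because $w$ is cleaner, forcing $P\in\min(I)$.

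The second claim is that $I+Su$ and $I:u$ are themselves $k$-clean. The strategy is to show that their polarizations $(I+Su)^p$ and $(I:u)^p$ are obtained from $I^p+S^pw$ and $I^p:w$ respectively by a procedure that preserves $k$-cleanness, namely modding out by the extra ``free'' polarization variables that arise once maximal exponents drop, and then to invoke the induction hypothesis after checking that the no-embedded-primes property is inherited by $I+Su$ and $I:u$. The main obstacle is precisely this mismatch between polarization and the operations $I\mapsto I+Su$, $I\mapsto I:u$: polarization is sensitive to the maximal exponent of each variable, so the ambient polynomial ring for $(I+Su)^p$ may properly differ from $S^p$. Tracking this discrepancy variable-by-variable, and verifying that the cleanness trees do strictly shorten at each stage so that the induction actually terminates, is the technical heart of the argument.
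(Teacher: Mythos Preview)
Your inductive skeleton---induct on $l(I^p)$, push the cleaner monomial down via $\pi$, verify the cleaner condition through the correspondence between $\min(I^p)$ and $\min(I)$---is exactly the paper's. The divergence, and the gap, is in your second claim.

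You replace $\pi(w)$ by its radical $u=\prod_{i\in\pi(\supp(w))}x_i$ and then try to compare the \emph{re-polarizations} $(I{+}Su)^p$ and $(I{:}u)^p$ with $I^p{+}S^pw$ and $I^p{:}w$. That comparison is exactly the obstacle you describe, and you do not resolve it: the ambient polarization ring changes once maximal exponents drop, and you give no argument that ``modding out the free variables'' preserves $k$-cleanness or that the lengths strictly decrease. As it stands the proposal is a sketch that names the difficulty without overcoming it.

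The paper avoids the obstacle altogether, and the trick is simply \emph{not to pass to the radical}. It takes $\pi(w)$ itself (possibly non-squarefree in $S$) as the candidate cleaner monomial of $I$, and uses the identities
\[
I+(\pi(w))=\pi\bigl(I^p+(w)\bigr),\qquad I:\pi(w)=\pi\bigl(I^p:w\bigr).
\]
With these in hand there is no need to re-polarize anything: $I^p{+}(w)$ and $I^p{:}w$ are already $k$-clean of strictly smaller length, and the induction hypothesis (read as a statement about $\pi$-images of $k$-clean ideals in $S^p$) applies directly to give that $I{+}(\pi(w))$ and $I{:}\pi(w)$ are $k$-clean. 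The support bound $|\supp(\pi(w))|\le|\supp(w)|\le k+1$ is immediate, and the cleaner condition for $\pi(w)$ follows from the same minimal-prime correspondence you already use. In short, the ``technical heart'' you anticipate is an artefact of taking the radical; drop that step and the proof collapses to a few lines.
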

\begin{proof}
We use induction on the $k$-cleanness length of $I^p$. If $I$ is a prime ideal then we have nothing to prove. Suppose that $I$ is not prime. Let $u$ be a cleaner monomial of $I^p$ with $|\supp(u)|\leq k+1$ and let $I^p:u$ and $I^p+(u)$ be $k$-clean. We claim that $\pi(u)$ is a cleaner monomial of $I$. Note that
\begin{center}
$I:\pi(u)=\pi(I^p:u)$ and $I+(\pi(u))=\pi(I^p+(u))$.
\end{center}
By induction hypothesis, $I:\pi(u)$ and $I+(\pi(u))$ are $k$-clean. Since $|\supp(\pi(u))|\leq |\supp(u)|\leq k+1$, it remains to show that $\pi(u)$ is a cleaner monomial of $I$. Let $P\in\min(I+(\pi(u)))$. Hence there exists $Q\in\min(I^p+(u))$ such that $P=\pi(Q)$. Since $Q\in\min(I^p)$, it follow that $P\in\min(I)$, as desired.
\end{proof}

\begin{lem}\label{polar cleaner}
Let $I\subset S$ be a $k$-clean monomial ideal with cleaner monomial $u$. Then $u^p$ is a cleaner monomial of $I^p$.
\end{lem}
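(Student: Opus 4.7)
The task is to verify the two conditions for $u^p$ to be a cleaner monomial of $I^p$: (a) $u^p\notin I^p$ and (b) $\min(I^p+(u^p))\subseteq\min(I^p)$. Condition (a) is immediate from the divisibility equivalence $v\mid u\iff v^p\mid u^p$ for monomials $v,u$: since no $v\in G(I)$ divides $u$, no $v^p\in G(I^p)$ divides $u^p$.

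For (b), set $J:=I+Su$. Since $u$ witnesses the $k$-cleanness of $I$, the ideal $J$ is itself $k$-clean, so $J$ has no embedded primes and $\min(J)\subseteq\min(I)$. The equality $J^p=I^p+(u^p)$ in a sufficiently large common polarization ring follows by generator-by-generator comparison together with $v\mid w\iff v^p\mid w^p$, so it suffices to prove $\min(J^p)\subseteq\min(I^p)$. Writing the irredundant primary decompositions $I=\bigcap_{P\in\min(I)}Q_I(P)$ and $J=\bigcap_{P\in\min(J)}Q_J(P)$, the key identification I would establish is
\[
Q_J(P)=Q_I(P)+(u_P),\qquad u_P:=\prod_{r=1}^{s}x_{i_r}^{\nu_{i_r}(u)},
\]
for each $P=(x_{i_1},\dots,x_{i_s})\in\min(J)$. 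Indeed $Q_I(P)+(u_P)$ has its generators in $\{x_{i_1},\dots,x_{i_s}\}$ and radical $P$, hence is $P$-primary; it contains $J$ (because $u_P\mid u$, so $u\in(u_P)$), which forces $Q_J(P)\subseteq Q_I(P)+(u_P)$; the reverse inclusion holds since $u\equiv u_P$ modulo units in $S_P$.

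Polarizing and using the monomial identity $(A\cap B)+(v)=(A+(v))\cap(B+(v))$, we get $J^p=\bigcap_{P\in\min(J)}\bigl(Q_I(P)^p+(u_P)^p\bigr)$. For each such $P$, start from an irreducible decomposition $Q_I(P)=\bigcap_\mu\bigl(x_{i_1}^{b_{\mu,1}},\dots,x_{i_s}^{b_{\mu,s}}\bigr)$; distributing $+(u_P)$ shows that every irreducible component of $Q_I(P)+(u_P)$ has exponent vector coordinate-wise dominated by some $(b_{\mu,\cdot})$ (one simply replaces a single entry $b_{\mu,r}$ by $\nu_{i_r}(u)\le b_{\mu,r}$). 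Since the minimal primes of the polarization of $(x_{i_1}^{a_1},\dots,x_{i_s}^{a_s})$ are exactly the ideals $(x_{i_1,\sigma(1)},\dots,x_{i_s,\sigma(s)})$ with $1\le\sigma(r)\le a_r$, this coordinate-wise domination yields $\min(Q_J(P)^p)\subseteq\min(Q_I(P)^p)\subseteq\min(I^p)$. Primes of this form associated with distinct minimal primes of $I$ are pairwise incomparable, so no cross-redundancy occurs and $\min(J^p)=\bigcup_{P\in\min(J)}\min(Q_J(P)^p)\subseteq\min(I^p)$.

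The main obstacle is controlling the primary structure of $J^p$: were $J$ to have embedded primes, their polarized components would produce minimal primes of $J^p$ lying strictly above minimal primes of $I^p$. The hypothesis that $u$ witnesses $k$-cleanness of $I$ (so $J$ is itself $k$-clean, hence unmixed) is exactly what excludes this pathology and legitimates the primary-component analysis above.
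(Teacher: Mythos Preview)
Your argument is correct, but it takes a substantially longer route than the paper's. The paper disposes of the lemma in three lines by citing the standard correspondence between associated primes under polarization (Faridi, Prop.~2.3 and Cor.~2.6): for $Q\in\min(I^p+(u^p))=\min\bigl((I+Su)^p\bigr)$ one has $\pi(Q)\in\Ass\bigl(S/(I+Su)\bigr)$; since $I+Su$ is itself $k$-clean this set equals $\min(I+Su)\subseteq\min(I)$, and the same correspondence lifts $\pi(Q)$ back to $Q\in\min(I^p)$. Your approach---identifying the primary components $Q_J(P)=Q_I(P)+(u_P)$ explicitly, passing to irreducible decompositions, and then tracking the minimal primes of each polarized irreducible piece by hand---essentially reproves that Ass-correspondence in the specific situation at hand. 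The gain is self-containment (no black-box citation); the cost is a good deal of bookkeeping that the paper simply outsources. Both proofs rest on the same structural point you isolate in your final paragraph: $I+Su$ inherits $k$-cleanness, hence has no embedded primes, so every primary (equivalently, polarized-minimal) datum of $J$ already appears among those of $I$.

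One small quibble: the parenthetical ``one simply replaces a single entry $b_{\mu,r}$ by $\nu_{i_r}(u)\le b_{\mu,r}$'' is not a literal description of how the irreducible components of $M_\mu+(u_P)$ arise in general (for instance, $\nu_{i_r}(u)$ can exceed $b_{\mu,r}$ for some $r$, in which case that coordinate is not replaced). What your argument actually uses---and what is true---is only the weaker domination statement: every irreducible component of $Q_I(P)+(u_P)$ contains $Q_I(P)$, hence contains some $M_\mu$, hence has exponent vector coordinate-wise $\le (b_{\mu,\cdot})$. That is enough for the minimal-prime comparison that follows, so the proof stands; you may simply want to rephrase or drop the parenthetical.
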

\begin{proof}
Let $Q\in\min(I^p+(u^p))$. Then $Q\in\Ass(S^p/I^p+(u^p))$. By Corollary 2.6 of \cite{Fa1}, $\pi(Q)\in\Ass(S/I+(u))=\min(I+(u))\subset\min(I)$. Again, by Proposition 2.3 of \cite{Fa1}, $Q\in\min(I^p)$, as desired.
\end{proof}

The following theorem describes projective dimension and Castelnuovo-Mumford regularity of $k$-clean monomial ideals

\begin{thm}
Let $I\subset S$ be a $k$-clean monomial ideal with the cleaner monomial $u$. Then
\begin{enumerate}[\upshape (i)]
  \item $\pd(S/I)=\max\{\pd(S/I+(u)),\pd(S/I:u)\}$;
  \item $\reg(S/I)=\max\{\reg(S/I+(u)),\reg(S/I:u)+\deg(u)\}$.
\end{enumerate}
\end{thm}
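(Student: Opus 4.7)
The plan is to derive both equalities from the short exact sequence
\[
(\ast)\quad 0 \longrightarrow \bigl(S/(I{:}u)\bigr)(-d) \xrightarrow{\;\cdot u\;} S/I \longrightarrow S/(I+(u)) \longrightarrow 0, \qquad d := \deg u.
\]
Write $A=(S/(I{:}u))(-d)$, $B=S/I$, $C=S/(I+(u))$. The standard depth lemma applied to $(\ast)$ together with the Auslander-Buchsbaum formula yields the upper bound $\pd(B)\le\max\{\pd(A),\pd(C)\}=\max\{\pd(S/(I{:}u)),\pd(S/(I+(u)))\}$, and the standard regularity lemma yields $\reg(B)\le\max\{\reg(A),\reg(C)\}=\max\{\reg(S/(I+(u))),\reg(S/(I{:}u))+d\}$. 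These are the easy $\le$ halves of (i) and (ii).

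For the matching $\ge$ inequalities I would read the long exact sequence of $\mathrm{Tor}^S_\bullet(-,K)$ attached to $(\ast)$, tracking internal degrees in the regularity case. A careful index-by-index analysis shows that $\pd B=\max\{\pd A,\pd C\}$ follows automatically from the LES \emph{unless} $\pd C=\pd A+1$, and that $\reg B=\max\{\reg A,\reg C\}$ follows automatically unless $\reg A=\reg C+1$. In these two ``critical'' configurations the connecting homomorphism in the LES could, a priori, swallow the extremal $\mathrm{Tor}$ classes from both flanks simultaneously, and extra input is required.

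This extra input is precisely what the $k$-clean hypothesis supplies. Because $I$ has no embedded primes and $\min(I+(u))\subseteq\min(I)$, a short computation yields the disjoint partition
\[
\min(I) \;=\; \min(I{:}u)\;\sqcup\;\min(I+(u)),
\]
so that the extremal multigraded $\mathrm{Tor}$ positions of $A$ and $C$ occupy disjoint multidegrees; combined with the fact that both $I{:}u$ and $I+(u)$ are themselves $k$-clean and hence also free of embedded primes, this forces the connecting $\mathrm{Tor}$ map to vanish on the critical classes. Equivalently, $(\ast)$ is a Betti splitting in the sense of Francisco-H\`{a}-Van Tuyl, whence $\beta_{i,j}(B)=\beta_{i,j}(A)+\beta_{i,j}(C)$ at the extremal positions and the two equalities follow at once. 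The main obstacle I anticipate is making this vanishing/Betti-splitting statement precise for a general cleaner monomial $u$ with $|\supp(u)|\le k+1$ (rather than merely a variable); I would handle it via an explicit multigraded comparison of the Taylor-type resolutions of $I$, $I{:}u$ and $I+(u)$, using the disjoint partition of minimal primes to rule out cancellations at the extremal multidegrees.
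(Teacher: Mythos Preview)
Your approach via the short exact sequence $(\ast)$ differs from the paper's. The paper does not analyze the connecting maps in the $\mathrm{Tor}$ long exact sequence at all; instead it polarizes (using that $\pd$ and $\reg$ are preserved), observes that $u^p$ is again a cleaner monomial of $I^p$, identifies $I^p$ with the Stanley--Reisner ideal of a $k$-decomposable simplicial complex with shedding face $\sigma=\supp(u^p)$ via the main correspondence theorem of the paper, and then simply quotes Moradi's theorem on the homological invariants of $k$-decomposable complexes, which already furnishes the desired formulas in terms of link and deletion.

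Your argument contains a genuine gap at the critical step. The claimed disjoint partition $\min(I)=\min(I{:}u)\sqcup\min(I+(u))$ is false. Take $I=(x_1^2,x_2)\subset K[x_1,\dots,x_n]$ and $u=x_1$: here $I$ is irreducible (hence $0$-clean), $u\notin I$, and $I+(u)=(x_1,x_2)$ satisfies $\min(I+(u))=\{(x_1,x_2)\}=\min(I)$, so $u$ is a legitimate cleaner monomial; yet $I{:}u=(x_1,x_2)$ as well, so $\min(I{:}u)=\min(I+(u))=\min(I)$ and the two sets coincide rather than being disjoint. In general, whenever $u$ lies in a minimal prime $P$ of $I$ but not in the $P$-primary component of $I$, the prime $P$ will belong to both $\min(I{:}u)$ and $\min(I+(u))$. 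Even granting some weaker statement about minimal primes, the next inference --- that the extremal multigraded $\mathrm{Tor}$ classes of $A$ and $C$ occupy disjoint multidegrees and hence the connecting map vanishes on them --- is not justified: minimal primes govern the support of a module, not the multidegrees of its top Betti numbers, and a Betti-splitting conclusion does not follow from information about $\min(-)$ alone. To carry out a direct short-exact-sequence proof you would need a different mechanism to exclude the critical configurations $\pd C=\pd A+1$ and $\reg A=\reg C+1$ (for instance an induction along the ideal tree of $I$); the paper avoids this issue entirely by outsourcing the hard inequality to Moradi's result in the squarefree setting.
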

\begin{proof}
(i) Without loss of generality we may assume that $I\subset \fm^2$. By Corollary 1.6.3. of \cite{HeHi}, $\pd(S/I)=\pd(S^p/I^p)$ and $\reg(S/I)=\reg(S^p/I^p)$. Let $\Del$ be a simplicial complex with $I_\Del=I^p$. By Lemma \ref{polar cleaner}, $u^p$ is a cleaner monomial of $I^p$. Let $u^p=\x^\sigma$ for some $\sigma\in\Del$. Therefore $\Del$ is a $k$-decomposable simplicial complex with shedding monomial $\sigma$, by Theorem \ref{k-dec k-cl}. Now it follows from Theorem 2.8 of \cite{Mo} that
$$\begin{array}{rl}
  \pd(S/I)=\pd(S^p/I_\Del)= & \max\{\pd(S^p/I_{\Del\backslash\sigma}),\pd(S^p/J_{\lk_\Del\sigma})\} \\
  = & \max\{\pd(S^p/(I+(u))^p),\pd(S^p/(I:u)^p)\} \\
  = & \max\{\pd(S/I+(u)),\pd(S/I:u)\}
\end{array}$$
where $J_{\lk_\Del\sigma}$ is the Stanley-Reisner ideal of $\lk_\Del\sigma$ considered as a complex on $V(\Del)\backslash\sigma$.

(ii) follows by a similar argument from Theorem 2.8 of \cite{Mo} and Theorem \ref{k-dec k-cl}.
\end{proof}

\begin{rem}
The concept of sequentially Cohen-Macaulayness was introduced in \cite{St} for finitely generated (graded) modules. We specially recall this concept for the quotient rings. Let $I\subset S$ be a monomial ideal. We say that $I$ is sequentially Cohen-Macaulay if there exists a finite filtration
$$\F:0=M_0\subset M_1\subset\ldots\subset M_r=S/I$$
of submodules of $S/I$ with these properties that $M_i/M_{i-1}$ is Cohen-Macaulay and
$$\dim(M_1/M_0)\leq\dim(M_2/M_1)\leq\ldots\leq\dim(M_r/M_{r-1}.$$
It was proven in \cite{HePo} that cleanness implies sequentially Cohen-Macaulayness. Therefore the class of $k$-clean monomial ideals is contained in the class of sequentially Cohen-Macaulay monomial ideals. In particular, since that every unmixed sequentially Cohen-Macaulay monomial ideal is Cohen-Macaulay, we conclude that the unmixed $k$-clean monomial ideals are Cohen-Macaulay.
\end{rem}

\section{A view toward $k$-decomposable simplicial complexes}

In this section, we prove the main result of this paper. In fact, we show that a squarefree $k$-clean monomial ideal is Stanley-Reisner ideal of a $k$-decomposable simplicial complex, and vice versa.

\begin{thm}\label{k-dec k-cl}
Let $\Del$ be a ($d-1$)-dimensional simplicial complex. Then $\sigma\in\Del$ is a shedding face of $\Del$ if and only if $\x^\sigma$ is a cleaner monomial of $I_\Del$.\par
In particular, $\Del$ is $k$-decomposable if and only if $I_\Del$ is $k$-clean, where $0\leq k\leq d-1$.
\end{thm}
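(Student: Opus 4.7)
The plan is to prove the dictionary ``$\sigma$ is a shedding face of $\Del$ iff $\x^\sigma$ is a cleaner monomial of $I_\Del$'' as a standalone lemma, and then leverage it together with an induction on $|\Del|$ to get the main biconditional.

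For the first assertion, I would start from two translations: adjoining $\x^\sigma$ kills exactly the faces of $\Del$ containing $\sigma$, so $I_\Del + (\x^\sigma) = I_{\Del\backslash\sigma}$, and the minimal primes of a Stanley--Reisner ideal are $\min(I_\Del) = \{P_{[n]\backslash F} : F \in \F(\Del)\}$. Combined, these show the cleaner condition $\min(I_\Del + (\x^\sigma)) \subseteq \min(I_\Del)$ is equivalent to ``every facet of $\Del\backslash\sigma$ is a facet of $\Del$''. The remaining combinatorial equivalence with shedding splits into two short arguments: if a facet $F$ of $\Del\backslash\sigma$ is not a facet of $\Del$, enlarging to $H \in \F(\Del)$ forces $\sigma \subseteq H$ by maximality of $F$ in $\Del\backslash\sigma$, whence $F$ is a facet of $\s_\Del\sigma\backslash\sigma$ as well, contradicting shedding; conversely, a common facet of $\s_\Del\sigma\backslash\sigma$ and $\Del\backslash\sigma$ would be a facet of $\Del$ by hypothesis, yet $F \cup \sigma \in \Del$ is a strict super-face, a contradiction.

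For the main equivalence I would induct on $|\Del|$. Two preliminary observations simplify matters: $I_\Del$ is squarefree, so the ``no embedded primes'' hypothesis is automatic; and a nonempty shedding/cleaner face $\sigma$ cannot lie in every facet of $\Del$ (otherwise, since $\emptyset \in \Del\backslash\sigma \neq \emptyset$, $\Del\backslash\sigma$ would contain a facet which is a proper sub-face of some facet of $\Del$, contradicting Part 1), which forces $|\Del\backslash\sigma|,\,|\lk_\Del\sigma|,\,|\s_\Del\sigma| < |\Del|$. For the forward implication, the simplex case gives $I_\Del = 0$ prime; otherwise pick shedding $\sigma$ with $\dim\sigma \leq k$, apply Part 1 to get the cleaner $\x^\sigma$ of support size $\leq k+1$, and use IH on the strictly smaller $\Del\backslash\sigma$ to conclude $I_\Del + (\x^\sigma) = I_{\Del\backslash\sigma}$ is $k$-clean. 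For the colon, use $I_\Del : \x^\sigma = I_{\s_\Del\sigma}$ and that $\s_\Del\sigma = \sigma * \lk_\Del\sigma$ is a cone; the routine identities $\lk_{\sigma*\Gamma}\tau = \sigma * \lk_\Gamma\tau$ and $(\sigma*\Gamma)\backslash\tau = \sigma*(\Gamma\backslash\tau)$ (for $\tau$ disjoint from $\sigma$) propagate $k$-decomposability from $\lk_\Del\sigma$ to $\s_\Del\sigma$, and IH applied to $\s_\Del\sigma$ yields $I_\Del : \x^\sigma$ is $k$-clean.

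For the reverse direction, the prime case gives a variable prime and $\Del$ is a simplex; otherwise let $u$ be a cleaner with $|\supp(u)| \leq k+1$ and set $\sigma = \supp(u)$. Since $I_\Del$ is squarefree, $I_\Del : u = I_\Del : \x^\sigma$ and $\sqrt{I_\Del + (u)} = I_\Del + (\x^\sigma)$, so Theorem \ref{radical} allows replacing $u$ by the squarefree cleaner $\x^\sigma$ while preserving $k$-cleanness of both successors. IH applied to the strictly smaller $\Del\backslash\sigma$ and $\s_\Del\sigma$ gives each is $k$-decomposable, and the same cone-compatibility passes $k$-decomposability back from $\s_\Del\sigma$ to $\lk_\Del\sigma$. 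Combined with Part 1, which promotes $\sigma$ to a shedding face of dimension $\leq k$, this gives $\Del$ is $k$-decomposable. The main obstacle is precisely this cone lemma used twice: $\sigma * \Gamma$ is $k$-decomposable iff $\Gamma$ is, with the dimension bound $\dim\tau \leq k$ on shedding faces preserved (the same $\tau \subset \Gamma$ witnesses shedding in both). The link/deletion identities above handle the bulk, but one must separately verify that shedding of $\tau$ in $\Gamma$ is equivalent to shedding of $\tau$ in $\sigma*\Gamma$, which is a direct facet-structure check in the join; once in place, the induction closes up.
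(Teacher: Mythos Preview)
Your argument is correct and follows essentially the same route as the paper: both prove Part~1 by translating the cleaner condition into $\F(\Del\backslash\sigma)\subseteq\F(\Del)$ via the minimal-prime description, and both run an induction using the identities $I_\Del+(\x^\sigma)=I_{\Del\backslash\sigma}$ and $I_\Del:\x^\sigma=$ (Stanley--Reisner ideal attached to the link).

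The only packaging difference is how the colon side is handled. The paper writes $I_\Del:\x^\sigma=J_{\lk_\Del\sigma}$ (the Stanley--Reisner ideal of $\lk_\Del\sigma$ on the vertex set $[n]\setminus\sigma$, extended to $S$) and applies the induction hypothesis directly to $\lk_\Del\sigma$; you instead pass through $I_{\s_\Del\sigma}$ and a separate cone lemma ``$\sigma*\Gamma$ is $k$-decomposable iff $\Gamma$ is''. These are the same content: the paper's identification of the colon with the link ideal in the larger ring silently absorbs the fact that adjoining cone vertices (equivalently, free variables) does not affect $k$-decomposability/$k$-cleanness, which is exactly your cone lemma. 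Your version makes that step explicit, which is arguably cleaner. Your Part~1 argument is also slightly more direct than the paper's (you avoid the $G=F\cup\{t\}$ computation), and your reverse direction fills in a detail the paper suppresses---namely invoking Theorem~\ref{radical} to replace a possibly non-squarefree cleaner $u$ by $\x^{\supp(u)}$---which is a genuine point since the definition of $k$-clean does not require cleaners to be squarefree.
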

\begin{proof}
We first show that $\sigma$ is a shedding face of $\Del$ if and only if $\min(I_\Del+(\x^\sigma))\subseteq\min(I_\Del)$. Since that Stanley-Reisner rings are reduced, it follows that
$$\min(I_\Del)=\{P_{F^c}:F\in\F(\Del)\}$$
and
$$\min(I_\Del+(\x^\sigma))=\{P_{F^c}:F\in\F(\Del\backslash\sigma)\}.$$
Let $\sigma$ be the shedding face of $\Del$. To show that $\x^\sigma$ is a cleaner monomial of $I_\Del$, it suffices to prove $\F(\Del\backslash\sigma)\subseteq\F(\Del)$. Suppose, on the contrary, that $F\in\F(\Del\backslash\sigma)$ and $F\subsetneqq G$ with $G\in\F(\Del)$. This implies that $\sigma\subset G$ and so $G\in\s_\Del\sigma$. On the other hand, since $F$ is a facet of $\Del\backslash\sigma$, it follows that there is $t\in\sigma$ such that $\sigma\backslash\{t\}\subset F$. We claim that $G=F\dot{\cup}\{t\}$. The inclusion ``$\supseteq$'' is clear. For the converse inclusion, if $s\in G\backslash(F\cup\{t\})$ for some $s$, then $\sigma\nsubseteq F\cup\{s\}$ and so $F\cup\{s\}\in\F(\Del\backslash\sigma)$, a contradiction. Therefore $G=F\dot{\cup}\{t\}$ and it follows that $F\in\F((\s_\Del\sigma)\backslash\sigma)$. But this contradicts the assumption that $\sigma$ is a shedding face of $\Del$. Hence $\x^\sigma$ is a cleaner monomial.

Let $\Del$ be $k$-decomposable with the shedding face $\sigma\in\Del$. By the first part, $\x^\sigma$ is a cleaner monomial of $I_\Del$. To showing that $I_\Del$ is $k$-clean, we use induction on the number of the facets of $\Del$. If $\Del$ is a simplex then the assertion is trivial. So assume that $|\F(\Del)|>1$. It is easy to check that $J_{\lk_\Del\sigma}=I_\Del:\x^\sigma$ and $I_{\Del\backslash\sigma}=I_\Del+(\x^\sigma)$. By induction hypothesis, $\lk_\Del\sigma$ and $\Del\backslash\sigma$ are $k$-decomposable if and only if $I_\Del:\x^\sigma$ and $I_\Del+(\x^\sigma)$ are $k$-clean. Therefore $I_\Del$ is $k$-clean.

The reverse directions of both parts follow easily in similar arguments.
\end{proof}

\begin{rem}\label{exam k-cl}
Note that a $k$-clean monomial ideal need not be $k'$-clean for $k'<k$. Consider the monomial ideal $I\subset K[x_1,\ldots,x_6]$ with the minimal generator set
\begin{center}
$G(I)=\{x_1x_2x_4,x_1x_2x_5,x_1x_2x_6,x_1x_3x_5,x_1x_3x_6,x_1x_4x_5,$\\
$x_2x_3x_6,x_2x_4x_5,x_2x_5x_6,x_3x_4x_5,x_3x_4x_6\}$.
\end{center}
$I$ is the Stanley-Reisner ideal of the simplicial complex $$\Del=\langle 124,125,126,135,136,145,236,245,256,345,346\rangle$$
on $[6]$. It was shown in \cite{Si} that $\Del$ is shellable but not vertex-decomposable. It follows from Theorem \ref{k-dec k-cl} that $I$ is clean but not $0$-clean. To see more examples of clean ideals which are not $0$-clean we refer the reader to \cite{Ha,MoTa}.
\end{rem}

\begin{rem}
Let $I$ be a clean monomial ideal and $\dim(S/I)=d$. By Theorem \ref{k-clean thm}, $I$ is $k$-clean for some $k\geq 0$ with cleaner monomial $u$. It follows from Theorem \ref{radical} that $\sqrt{I}$ is $k$-clean with cleaner monomial $v=\x^{\supp(u)}$. Let $I_\Del=\sqrt{I}$ for some simplicial complex $\Del$ on $[n]$. By Theorem \ref{k-dec k-cl}, we have $|\supp(u)|=|\supp(v)|\leq \dim(\Del)+1=d$. Therefore $I$ is $(d-1)$-clean.

On the other hand, every $k$-clean monomial ideal is also $(k+1)$-clean. This means that the $k$-cleanness is a hierarchical structure. Therefore we have the following implications:

\begin{center}
$0$-clean $\Rightarrow$ $1$-clean $\Rightarrow$ \ldots $\Rightarrow$ ($d-1$)-clean $\Leftrightarrow$ clean.
\end{center}
In Remark \ref{exam k-cl} we implied that above implications are strict.
\end{rem}

\begin{cor}\label{k-clean k-decom}
Let $I\subset S$ be a squarefree monomial ideal generated by monomials of degree at least $2$. Then $I$ is $k$-clean if and only if $I^\vee$ is $k$-decomposable.
\end{cor}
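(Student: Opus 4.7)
The plan is to realize this corollary as a two-step concatenation of two results already established earlier in the paper, using the fact that a squarefree monomial ideal generated in degree $\geq 2$ has a canonical realization as a Stanley--Reisner ideal.

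First I would invoke the standard observation recalled just before Definition~\ref{1}: since $I$ is squarefree and generated by monomials of degree at least $2$, there is a unique simplicial complex $\Del$ on $[n]$ with $I=I_\Del$, and by the very definition of the Alexander dual for ideals we have $I^\vee = I_{\Del^\vee}$. This lets me translate both sides of the biconditional to statements about $\Del$ and $\Del^\vee$.

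Next I would apply Theorem~\ref{k-dec k-cl} to the left-hand side, which gives the equivalence
\[
I = I_\Del \text{ is } k\text{-clean} \iff \Del \text{ is } k\text{-decomposable},
\]
valid in the range $0\leq k\leq d-1$, where $d-1=\dim\Del$. Then I would apply Theorem~\ref{Alex dual k-decom} to convert the combinatorial condition on $\Del$ to the algebraic condition on the dual:
\[
\Del \text{ is } k\text{-decomposable} \iff I_{\Del^\vee} = I^\vee \text{ is } k\text{-decomposable (as a monomial ideal)}.
\]
Chaining these two biconditionals yields exactly the statement of the corollary.

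I do not foresee any real obstacle: both ingredients do the heavy lifting, and the only thing to check is that the range of $k$ in the two cited theorems is compatible, which it is, since both are stated for $k\leq d-1$ with $d-1=\dim\Del$. The argument is thus essentially a one-line derivation from Theorems~\ref{k-dec k-cl} and~\ref{Alex dual k-decom} together with the identification $I^\vee=I_{\Del^\vee}$.
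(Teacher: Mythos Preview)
Your proposal is correct and matches the paper's own proof essentially verbatim: the paper simply takes $\Del$ with $I=I_\Del$ and invokes Theorems~\ref{k-dec k-cl} and~\ref{Alex dual k-decom}. Your additional remark about the compatibility of the range $k\leq d-1$ is a helpful clarification but not something the paper spells out.
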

\begin{proof}
Let $\Del$ be a simplicial complex on $[n]$ such that $I=I_\Del$. The assertion follows from Theorems \ref{k-dec k-cl} and \ref{Alex dual k-decom}.
\end{proof}

\section{Some classes of $k$-clean ideals}

In this section, we introduce some classes of $k$-clean monomial ideals.

\subsection{Irreducible monomial ideals}

\begin{thm}\label{primary}
Every irreducible monomial ideal is $0$-clean.
\end{thm}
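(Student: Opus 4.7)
The plan is to use induction on the total degree $\sum_{j} a_j$ of the generators of the irreducible monomial ideal. Recall that an irreducible monomial ideal has the form $I = (x_{i_1}^{a_1},\ldots,x_{i_k}^{a_k})$ with positive integer exponents, and is well known to be $P$-primary for $P=(x_{i_1},\ldots,x_{i_k})$, so in particular $\min(I)=\{P\}$ and $I$ has no embedded primes. This last fact is one of the ingredients required by the definition of $0$-cleanness, so it is free of charge.

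For the base case, I would observe that if every $a_j=1$ then $I=P$ is prime and hence trivially $0$-clean. For the inductive step, assume some $a_j\geq 2$ and set $u:=x_{i_j}^{a_j-1}$. Then $|\supp(u)|=1\leq 0+1$, and $u\notin I$ because $u$ has degree strictly less than $a_j$ in $x_{i_j}$ and involves no other variable. A direct colon computation gives
\[
I+Su=(x_{i_1}^{a_1},\ldots,x_{i_j}^{a_j-1},\ldots,x_{i_k}^{a_k}),\qquad
I:u=(x_{i_1}^{a_1},\ldots,x_{i_j},\ldots,x_{i_k}^{a_k}),
\]
both again irreducible monomial ideals with strictly smaller total exponent sum.

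Next I would verify that $u$ is a cleaner monomial, i.e.\ $\min(I+Su)\subseteq\min(I)$. Since $I+Su$ is irreducible, it is primary with unique associated (and hence minimal) prime $P=(x_{i_1},\ldots,x_{i_k})=\min(I)$, so the inclusion is in fact an equality. Combining this with the induction hypothesis applied to $I+Su$ and $I:u$ (which are $0$-clean by induction on the total degree), and with the already-noted absence of embedded primes, all the conditions in the definition of $0$-cleanness are met, so $I$ is $0$-clean.

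There is no real obstacle here; the only mild subtlety is ensuring that the chosen cleaner $u=x_{i_j}^{a_j-1}$ is a non-unit monomial not in $I$, which forces the case split on whether some $a_j\geq 2$, and checking that $I+Su$ and $I:u$ remain irreducible (so that the induction hypothesis applies and the primality-type hypotheses of the definition are automatic). Both checks are immediate from the explicit description of the generators displayed above.
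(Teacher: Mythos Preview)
Your proof is correct and follows essentially the same inductive scheme as the paper: induct on $\sum_j a_j$, with base case the prime ideal, and in the inductive step pick a variable whose exponent exceeds $1$ and use a single-support cleaner to reduce. The only cosmetic difference is the choice of cleaner: the paper takes $u=x_{i_j}$ itself, obtaining $I:u=(x_{i_1}^{a_1},\ldots,x_{i_j}^{a_j-1},\ldots,x_{i_k}^{a_k})$ and $I+(u)=(x_{i_1}^{a_1},\ldots,x_{i_j},\ldots,x_{i_k}^{a_k})$, whereas you take $u=x_{i_j}^{a_j-1}$, which simply interchanges the roles of $I:u$ and $I+Su$. Either choice works and the verification that $u$ is a cleaner (both ideals remain irreducible with the same unique minimal prime $P$) is identical.
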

\begin{proof}
Let $I$ be a irreducible monomial ideal. We want to show that $I$ is $0$-clean. By Theorem 1.3.1. of \cite{HeHi}, $I$ is generated
by pure powers of the variables. Without loss of generality we may assume that $I=(x^{a_1}_1,\ldots,x^{a_m}_m)$ with $a_i\neq 0$ for all $i$. We use induction on $\sum^m_{i=1}a_i$. If $\sum^m_{i=1}a_i=m$, then $I$ is prime and we are done. Suppose that $\sum^m_{i=1}a_i>m$. So we can assume that $a_1>1$. We have
\begin{center}
    $I:x_1=(x^{a_1-1}_1,x^{a_2}_2,\ldots,x^{a_m}_m)$ and $I+(x_1)=(x_1,x^{a_2}_2,\ldots,x^{a_m}_m)$.
\end{center}
By induction hypothesis, $I:x_1$ and $I+(x_1)$ are $0$-clean. Clearly, $x_1$ is a cleaner monomial and so the proof is completed.
\end{proof}

\subsection{Monomial complete intersection ideals}

\begin{thm}\label{comp inter}
Let $I\subset S$ be a monomial complete intersection ideal. Then $S/I$ is $0$-clean.
\end{thm}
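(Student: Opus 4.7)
The plan is to mimic the proof of Theorem \ref{primary} and induct on the total degree $N=\sum_{i=1}^{m}\deg(u_i)$, where $I=(u_1,\ldots,u_m)$ expresses the monomial complete intersection with pairwise disjoint supports (the standard characterisation of a regular sequence of monomials). The base case is $N=m$: each $u_i$ is a single variable and the $u_i$ are distinct, so $I$ is a prime ideal, hence $0$-clean by definition.

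For the inductive step $N>m$, some generator has degree at least $2$; after relabelling assume $\deg(u_1)\geq 2$, and pick any variable $x_\ell\in\supp(u_1)$. I claim $u:=x_\ell$ is a cleaner monomial with $|\supp(u)|=1\leq 0+1$. Since $x_\ell\mid u_1$, one has
\[
I+(x_\ell)=(x_\ell,u_2,\ldots,u_m),\qquad I:x_\ell=(u_1/x_\ell,u_2,\ldots,u_m),
\]
and both retain pairwise disjoint supports, so they are again monomial complete intersection ideals; their total degrees are $1+\sum_{i\geq 2}\deg(u_i)$ and $N-1$ respectively, each strictly less than $N$, so by the inductive hypothesis both are $0$-clean.

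To verify the cleaner condition I use the explicit description $\min(I)=\{(x_{j_1},\ldots,x_{j_m}):x_{j_i}\in\supp(u_i)\text{ for each }i\}$, valid for any monomial complete intersection. Applying the same description to $I+(x_\ell)$ realises $\min(I+(x_\ell))$ as exactly those minimal primes of $I$ in which the variable chosen from $\supp(u_1)$ is $x_\ell$; hence $\min(I+(x_\ell))\subseteq\min(I)$. The no-embedded-primes hypothesis is automatic since monomial complete intersections are Cohen--Macaulay, hence unmixed. The only mild subtlety is keeping track of the disjointness of supports under the two operations, but since $x_\ell$ belongs only to $\supp(u_1)$, this is immediate, so no step presents a real obstacle.
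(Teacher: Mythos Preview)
Your argument is correct. Both you and the paper proceed by induction, choose a variable dividing one of the generators, and reduce to two smaller complete intersections; the verification of $\min(I+(u))\subseteq\min(I)$ via the explicit description of the minimal primes is likewise the same. The differences are in execution: the paper takes as cleaner the full power $x_1^{a}$ of the variable occurring in $M_1$ (so that $x_1$ disappears from the support of $M_1/x_1^{a}$) and inducts on the number of variables appearing in the generators, with the base case being an irreducible ideal handled by Theorem~\ref{primary}. You instead peel off a single variable $x_\ell$, induct on the total degree $N$, and have the prime case as base, so your proof is self-contained and does not invoke Theorem~\ref{primary}. Either scheme works; yours is marginally more elementary, while the paper's choice of $x_1^{a}$ makes each step remove an entire variable at once.
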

\begin{proof}
Let $G(I)=\{M_1,\ldots,M_r\}$. By the assumption $M_1,\ldots,M_r$ is a regular sequence. Hence $\gcd(M_i,M_j)=1$ for all $i\neq j$. If $I$ is a primary ideal then we are done, by Theorem \ref{primary}. Suppose that $I$ is not primary. We use induction on $n$ the number of variables. Let $|\supp(M_1)|>1$ and let $\nu_1(M_1)=a$. Then
\begin{center}
    $I:x^a_1=(M_1/x^a_1,M_2,\ldots,M_r)$ and $I+(x^a_1)=(x^a_1,M_2,\ldots,M_r)$.
\end{center}
Since that $(M_1/x^a_1,M_2,\ldots,M_r)$ and $(x^a_1,M_2,\ldots,M_r)$ are complete intersection monomial ideals with the number of variables less that $n$, we deduce that $I:x^a_1$ and $I+(x^a_1)$ are $0$-clean, by induction hypothesis. Set $J:=(M_2,\ldots,M_r)$. Since that
$$\min(I+(x^a_1))=\{P+(x_1):P\in\min(J)\}$$
and
$$\min(I)=\{P+(x_i):P\in\min(J)\ \mbox{and}\ x_i|M_1\}.$$
we conclude that $\min(I+(x^a_1))\subset\min(I)$ and so $x^a_1$ is a cleaner monomial.
\end{proof}

\subsection{Cohen-Macaulay monomial ideals of codimension $2$}

Proposition 2.3 from \cite{HeHiZh} says that if $I\subset S$ is a squarefree monomial ideal with 2-linear resolution, then after suitable renumbering of the variables, one has the following property:
\begin{center}
if $x_ix_j\in I$ with $i\neq j$, $k>i$ and $k>j$, then either $x_ix_k$ or $x_jx_k$ belongs to $I$.
\end{center}
Let $I$ has a 2-linear resolution and the monomials in $G(I)$ be ordered by the lexicographical
order induced by $x_n>x_{n-1}>\ldots>x_1$. Let $u=x_sx_t>v=x_ix_j$ be squarefree monomials in $G(I)$ with $s<t$
and $i<j$. We have $t\geq j$. If $t=j$, then $x_s(v/x_i)=u\in G(I)$. If $t>j$ then by the above property either $x_ix_t\in G(I)$ or $x_jx_t\in G(I)$. This immediately implies the following lemma.

\begin{lem}\label{linear res}
If $I$ is a squarefree monomial ideal generated in degree 2 which has a linear resolution, then after suitable
renumbering of the variables, $I$ is weakly polymatroidal.
\end{lem}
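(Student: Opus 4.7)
The plan is to verify the weakly polymatroidal condition directly for a squarefree degree-two ideal $I$ with $2$-linear resolution, using the structural property extracted from Proposition~2.3 of \cite{HeHiZh} and recorded in the paragraph preceding the lemma.

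First I would apply that proposition to fix a labeling of the variables in which, whenever $x_ix_j\in I$ with $i\neq j$ and $k>i$, $k>j$, at least one of $x_ix_k$ or $x_jx_k$ belongs to $I$. Because the definition of weakly polymatroidal is framed with respect to the lex order induced by $x_1>\cdots>x_n$ while the convenient order for applying the structural property is $x_n>\cdots>x_1$, I would then relabel $y_i:=x_{n+1-i}$; in the $y$-variables the property becomes: if $y_iy_j\in I$ with $i\neq j$ and $k<i$, $k<j$, then $y_iy_k\in I$ or $y_jy_k\in I$. I then verify weak polymatroidality in the $y$-variables with respect to the standard lex order.

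Concretely, take generators $u=y_sy_t$ and $v=y_iy_j$ of $I$ with $s<t$, $i<j$, and $u>_{lex}v$. A short exponent comparison shows that either (a) $s<i$, with first differing position $s$, or (b) $s=i$ and $t<j$, with first differing position $t$; the remaining configurations either contradict $u>_{lex}v$ or force $u=v$. In case (a), the relabeled structural property applied to $v\in I$ with $s<i<j$ yields $y_sy_i\in I$ or $y_sy_j\in I$, and these equal $y_s(v/y_j)$ and $y_s(v/y_i)$ respectively, giving the desired witness $p\in\{i,j\}$ with $p>s$. In case (b), the monomial $u=y_iy_t=y_t(v/y_j)$ itself serves as a witness with $p=j>t$.

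The principal obstacle is simply keeping the bookkeeping straight between the two opposite lex conventions in play — the one used by \cite{HeHiZh} and the one appearing in the definition of weakly polymatroidal — so that the relabeling is performed consistently; once this is fixed, each of the two cases is a single-line verification, and the argument is essentially just a careful rephrasing of the observation already made in the paragraph above the lemma.
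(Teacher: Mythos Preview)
Your argument is correct and is essentially the same as the paper's: both fix the Herzog--Hibi--Zheng numbering, then verify the weakly polymatroidal exchange condition by the same two-case analysis ($t=j$ versus $t>j$ in the paper's notation, corresponding to your cases (b) and (a)). The only cosmetic difference is that you carry out the relabeling $y_i=x_{n+1-i}$ explicitly so as to work with the standard lex order $y_1>\cdots>y_n$, whereas the paper leaves the variables alone and simply uses lex with respect to $x_n>\cdots>x_1$; the two formulations are trivially equivalent.
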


\begin{thm}\label{CM cod2}
Let $I\subset S$ be a monomial ideal which is Cohen-Macaulay and of codimension $2$. Then $S/I$ is $0$-clean.
\end{thm}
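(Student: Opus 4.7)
The plan is to polarize in order to reduce to a squarefree monomial ideal, and then pass through Alexander duality to a $0$-decomposable ideal by way of a linear resolution and the weakly polymatroidal property.

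First I would observe that, since $I$ is Cohen-Macaulay, $I$ is unmixed and in particular has no embedded associated primes. By Theorem \ref{k-clean embedded}, it therefore suffices to prove that the polarization $I^p\subset S^p$ is $0$-clean. Polarization preserves height and Cohen-Macaulayness of the quotient (standard fact, see \cite{HeHi}), so $I^p$ is again Cohen-Macaulay of codimension $2$, and is now squarefree. Write $I^p=I_\Del$ for the unique simplicial complex $\Del$ on the relevant vertex set.

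Next I would exploit Alexander duality. Because $S^p/I_\Del$ is Cohen-Macaulay, $\Del$ is pure, and since $I_\Del$ has codimension $2$, every facet of $\Del$ has the same cardinality $n^p-2$. The minimal generators of $I_{\Del^\vee}=(I^p)^\vee$ are precisely the monomials $\x^{[n^p]\setminus F}$ with $F$ a facet of $\Del$, so $(I^p)^\vee$ is generated in degree $2$. By the Eagon-Reiner theorem, Cohen-Macaulayness of $S^p/I_\Del$ is equivalent to $(I^p)^\vee$ having a linear resolution; combining this with the degree-$2$ generation, $(I^p)^\vee$ admits a $2$-linear resolution. Lemma \ref{linear res} then yields that, after a suitable renumbering of the variables, $(I^p)^\vee$ is weakly polymatroidal. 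Theorem \ref{w.p. v.d.} gives that $(I^p)^\vee$ is variable-decomposable, i.e.\ $0$-decomposable.

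Finally, Corollary \ref{k-clean k-decom}, applied to the squarefree ideal $I^p$ (generated in degree $\geq 2$, which we may assume by first factoring out any variables that appear in $G(I)$), translates the $0$-decomposability of $(I^p)^\vee$ into $0$-cleanness of $I^p$. Together with Theorem \ref{k-clean embedded}, this shows that $I$ itself is $0$-clean, completing the proof. The one step that warrants extra care is the reduction to the degree-$\geq 2$ case needed for Corollary \ref{k-clean k-decom} and Alexander duality: if some variable $x_i$ lies in $G(I)$, one first splits it off via the decomposition $I=(I\cap K[x_j:j\neq i])+ (x_i)$, using the cleaner monomial $x_i$ itself, and applies induction to the smaller polynomial ring; the residual ideal is still Cohen-Macaulay of codimension $\leq 2$ in fewer variables, so the argument above applies verbatim.
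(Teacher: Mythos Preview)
Your proof is correct and follows essentially the same route as the paper: polarize (using Theorem~\ref{k-clean embedded}), apply Eagon--Reiner to get a $2$-linear resolution for $(I^p)^\vee$, use Lemma~\ref{linear res} and Theorem~\ref{w.p. v.d.} to obtain variable-decomposability of the dual, and then translate back to $0$-cleanness of $I^p$. The only cosmetic difference is that the paper cites Theorem~\ref{k-dec k-cl} (implicitly together with Theorem~\ref{Alex dual k-decom}) for the last step, whereas you cite Corollary~\ref{k-clean k-decom}; your extra reduction handling degree-$1$ generators is a harmless technicality that the paper's citation chain sidesteps.
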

\begin{proof}
Since $I$ has no embedded prime ideals, if we show that $I^p$ is $0$-clean then it follows from Theorem \ref{k-clean embedded} that $I$ is $0$-clean. Let $\Del$ be a simplicial complex with $I_\Del=I^p$. Since $I$ is Cohen-Macaulay, by Corollary 1.6.3. of \cite{HeHi}, $I_\Del$ is Cohen-Macaulay, too. In particular, $I_\Del^\vee$ has linear resolution, by the Eagon-Reiner theorem \cite{EaRe}. It follows from Lemma \ref{linear res} and Theorems \ref{w.p. v.d.} and \ref{k-dec k-cl} that $I_\Del=I^p$ is $0$-clean, as desired.
\end{proof}

\subsection{Symbolic powers of Stanley-Reisner ideals of matroid complexes}

Let $\Del$ be a simplicial complex and let $I^{(m)}_\Del$ denote the $m$th symbolic power of $I_\Del$. Minh and Trung \cite{MiTr} and Varbaro \cite{Va} independently proved that $\Del$ is a matroid if and only if $I^{(m)}_\Del$ is Cohen-Macaulay for all $m\in\NN$. Later, in \cite{TeTr}, Terai and Trung showed that $\Del$ is a matroid if and only if $I^{(m)}_\Del$ is Cohen-Macaulay for some integer $m\geq 3$. Recently, Bandari and Soleyman Jahan \cite{BaSo} proved that if $\Del$ is a matroid, then $I^{(m)}_\Del$ is clean for all $m\in\NN$. In this section, we improve this result by showing that if $\Del$ is a matroid, then $I^{(m)}_\Del$ is $0$-clean for all $m\in\NN$.

\begin{thm}\label{matroid}
Let $\Del$ be a matroid complex with $I=I_\Del$. Then for all $m\geq 1$, $I^{(m)}$ is $0$-clean.
\end{thm}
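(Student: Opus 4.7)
My plan is to proceed by induction on $n+m$, where $n$ is the number of vertices of $\Delta$. Throughout I use the theorem of Minh--Trung \cite{MiTr} and Varbaro \cite{Va} that $I^{(m)}_\Delta$ is Cohen--Macaulay, and in particular has no embedded primes, as is required by the definition of $0$-clean.

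For the base case $m=1$, the ideal $I^{(1)}_\Delta=I_\Delta$ is the Stanley--Reisner ideal of a matroid, which is vertex-decomposable by the classical theorem of Provan--Billera; Theorem~\ref{k-dec k-cl} then yields that $I_\Delta$ is $0$-clean.

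In the inductive step ($m\geq 2$), I first reduce to the case where $\Delta$ has no coloop, since if a vertex $v$ is a coloop then $\Delta$ is a cone over $\Delta\setminus v$, the ideal $I^{(m)}_\Delta$ coincides with the extension of $I^{(m)}_{\Delta\setminus v}$ by the free variable $x_v$, and the induction applies immediately. Otherwise, I fix any vertex $v$ and propose the cleaner monomial $u=x_v^m$, whose support $\{v\}$ has size one. The cleaner condition follows from the identity $\sqrt{I^{(m)}_\Delta + (x_v^m)} = I_{\Delta\setminus v}$ together with the observation that, in the absence of coloops, every facet of $\Delta\setminus v$ is a facet of $\Delta$. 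The colon is straightforward: $I^{(m)}_\Delta:x_v^m=\bigcap_{v\in F}P_{F^c}^m=I^{(m)}_{\lk_\Delta v}\cdot S$, and this is $0$-clean by the induction hypothesis applied to the matroid $\lk_\Delta v$, combined with the elementary fact that $0$-cleanness is preserved when one extends the polynomial ring by an unused variable.

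The main technical step, and the principal obstacle, is to show that $I^{(m)}_\Delta + (x_v^m)$ is itself $0$-clean. Here I would analyze the chain
\[
T_k := (I^{(m)}_\Delta : x_v^k) + (x_v^{m-k}), \qquad 0 \le k \le m-1,
\]
starting at $T_0 = I^{(m)}_\Delta + (x_v^m)$; the colon identity $T_k : x_v = T_{k+1}$ (valid for $0 \le k \le m-2$, from distributivity of colon over sum for monomial ideals) lets the cleaner monomial $x_v$ be applied iteratively, reducing the problem to the $0$-cleanness of each residue $T_k + (x_v) = (I^{(m)}_\Delta : x_v^k) + (x_v)$. Passing to $K[x_i : i \ne v]$, these residues correspond to the intersections
\[
\bar{J}_k \;:=\; I^{(m)}_{\lk_\Delta v} \cap I^{(m-k)}_{\Delta\setminus v}.
\]
A direct matroid exchange argument identifies $\bar{J}_0$ with $I^{(m)}_{\Delta\setminus v}$, so the $k=0$ residue is handled by the induction hypothesis applied to $\Delta\setminus v$. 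For $1 \le k \le m-1$ the intersection is in general strict, and here I would run a secondary induction within $K[x_i:i\ne v]$, choosing at each step a single variable $x_w$ as cleaner monomial (to preserve the support-size-one requirement of $0$-cleanness) and using the matroid exchange axiom to reduce each colon and residue to symbolic power ideals of matroid Stanley--Reisner type on smaller vertex sets, which are controlled by the outer induction on $n+m$. This is essentially the exchange-based analysis used by Bandari and Soleyman Jahan in \cite{BaSo} to establish cleanness, refined so that every cleaner monomial that appears in the filtration has support of size exactly one.
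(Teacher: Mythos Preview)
Your approach is substantially different from the paper's. The paper does not induct on the ideal directly: it polarizes $I^{(m)}$, passes to the Alexander dual $((I^{(m)})^p)^\vee$, and proves this dual is weakly polymatroidal with respect to an explicit variable order (refining the linear-quotients argument of \cite{BaSo}). Theorem~\ref{w.p. v.d.} then gives variable-decomposability of the dual, Corollary~\ref{k-clean k-decom} gives $0$-cleanness of $(I^{(m)})^p$, and Theorem~\ref{k-clean embedded} transfers this to $I^{(m)}$. No intermediate ``mixed'' ideals ever appear.

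Your inductive scheme, by contrast, has a genuine gap at the step you flag as the principal obstacle. The recursive definition of $0$-clean in this paper requires that \emph{every} ideal appearing in the ideal tree have no embedded primes. Your residues $\bar J_k=I^{(m)}_{\lk_\Delta v}\cap I^{(m-k)}_{\Delta\setminus v}$ for $1\le k\le m-1$ generally fail this. Take $\Delta=U_{2,4}$ (all $2$-subsets of $\{1,2,3,4\}$), $m=3$, $v=4$. In $K[x_1,x_2,x_3]$ one computes
\[
\bar J_2=I^{(3)}_{\lk_\Delta 4}\cap I_{\Delta\setminus 4}=(x_1x_2^2x_3^2,\ x_1^2x_2x_3^2,\ x_1^2x_2^2x_3),
\]
and $\bar J_2:x_1^2x_2x_3=(x_2,x_3)$, so $(x_2,x_3)\in\Ass(K[x_1,x_2,x_3]/\bar J_2)$, while $\min(\bar J_2)=\{(x_1),(x_2),(x_3)\}$. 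Thus $\bar J_2$ has embedded primes and is \emph{not} $0$-clean; the chain $T_0,T_1,T_2$ cannot terminate as you describe. The phrase ``refine the Bandari--Soleyman Jahan argument so every cleaner has support one'' does not rescue this: a clean prime filtration of $S/I$ tolerates intermediate ideals with embedded primes, but the recursive notion of $0$-clean does not, and your secondary induction must pass through ideals like $\bar J_2$ regardless of which single variable you pick next.
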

\begin{proof}
Let $\Del=\langle F_1,\ldots,F_t\rangle$. Then $I=I_\Del=\bigcap^t_{i=1}P_{F^c_i}$ and $(I_\Del)^{(m)}=\bigcap^t_{i=1}(P_{F^c_i})^{(m)}$. Since $\Del$ is a matroid and $I$ is Cohen-Macaulay, it follows that $I^{(m)}$ has no embedded prime ideal. Therefore if we show that $(I^{(m)})^p$ is $0$-clean then the proof is completed, by Theorem \ref{k-clean embedded}.

In \cite{BaSo} the authors introduced an ordering on the variables of $S^p$ and showed that $((I^{(m)})^p)^\vee$ has linear quotients with respect to this ordering. We improve this result by considering the same ordering to show that $((I^{(m)})^p)^\vee$ is weakly polymatroidal. Then by Theorem \ref{w.p. v.d.} and Corollary \ref{k-clean k-decom}, $(I^{(m)})^p$ is $0$-clean. We use some notations of the proof of \cite[Theorem 2.1.]{BaSo}. It is known that $\Del^c$ is a matroid. Let $\dim(\Del^c)=r-1$. We  set $J=((I^{(m)})^p)^\vee$. Then
$$G(J)=\{x_{i_1,j_1}x_{i_2,j_2}\ldots x_{i_r,j_r}:\{i_1,\ldots,i_r\}\ \mbox{is a facet of}\ \Del^c\}$$
where $1\leq j_l\leq m$ and $\sum^r_{l=1}j_l\leq m+r-1$.

Consider the order $<$ on the variables of $S^\alpha$ by setting $x_{i,j}>x_{i',j'}$ if either $j<j'$, or $j=j'$ and $i<i'$. Let $u,v\in G(J)$ with $u=x_{i_r,j_r}\ldots x_{i_2,j_2} x_{i_1,j_1}>v=x_{i_r',j_r'}\ldots x_{i_2',j_2'} x_{i_1',j_1'}$ such that $x_{i_l,j_l}=x_{i_l',j_l'}$ for all $l>t$ and $x_{i_t,j_t}>x_{i_t',j_t'}$. We have two cases:

\textbf{Case 1.} $x_{i_t}|x_{i_r'}\ldots x_{i_{t+1}'}x_{i_t'}$. Let $i_l'=i_t$. It is clear that $j_t<j_l'$. In particular, $x_{i_t,j_t}(v/x_{i_l',j_l'})\in G(J)$.

\textbf{Case 2.} $x_{i_t}\nmid x_{i_r'}\ldots x_{i_{t+1}'}x_{i_t'}$. Since $I_{\Del^\vee}$ is matroidal, it follows from \cite[Lemma 3.1.]{HeHi6} that there exists $i'_l\not\in\{i_1,\ldots,i_r\}$ such that $x_{i_t}(x_{i_r'}\ldots x_{i_1'}/x_{i_l'})\in I_{\Del^\vee}$. Therefore $$x_{i_r',j_r'}\ldots x_{i_{l-1}',j_{l-1}'}x_{i_{l+1}',j_{l+1}'}\ldots x_{i_t,j_t}x_{i_{t-1}',j_{t-1}'}\ldots x_{i_1',j_1'}\in G(J).$$

Therefore $J$ is weakly polymatroidal, as desired.
\end{proof}

It follows from Theorem \ref{matroid} that we can add the condition ``$0$-cleanness of $I^{(m)}_\Del$ for all $m>0$'' to \cite[Corollary 2.3.]{BaSo}:

\begin{cor}
Let $\Del$ be a pure simplicial complex and $I=I_\Del\subset S$. Then the following
conditions are equivalent:
\begin{enumerate}[\upshape (i)]
  \item $\Del$ is a matroid;
  \item $S/I^{(m)}$ is $0$-clean for all integer $m>0$;
  \item $S/I^{(m)}$ is clean for some integer $m>0$;
  \item $S/I^{(m)}$ is clean for some integer $m\geq 3$;
  \item $S/I^{(m)}$ is Cohen-Macaulay for some integer $m\geq 3$;
  \item $S/I^{(m)}$ is Cohen-Macaulay for all integer $m>0$.
\end{enumerate}
\end{cor}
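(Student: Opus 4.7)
The plan is short: this corollary extends \cite[Corollary 2.3.]{BaSo}, which already furnishes the equivalences among (i), (iii), (iv), (v), and (vi). All that remains is to insert the new condition (ii) into that existing cycle. Since (ii) is a natural strengthening of (iii), the cleanest strategy is to establish the two implications (i) $\Rightarrow$ (ii) $\Rightarrow$ (iii), which closes the loop.

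For (i) $\Rightarrow$ (ii), I would simply invoke Theorem \ref{matroid}: if $\Del$ is a matroid, then $I^{(m)}$ is $0$-clean for every $m \geq 1$. That theorem is the substantive new content established just above; its proof passes through polarization and Theorem \ref{k-clean embedded}, reducing the problem to showing that $((I^{(m)})^p)^\vee$ is weakly polymatroidal, hence variable-decomposable by Theorem \ref{w.p. v.d.}, and finally concluding that $(I^{(m)})^p$ is $0$-clean via Corollary \ref{k-clean k-decom}.

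For (ii) $\Rightarrow$ (iii), the first assertion of Theorem \ref{k-clean thm} tells us that every $0$-clean monomial ideal is clean. Thus (ii) implies that $S/I^{(m)}$ is clean for all $m > 0$, and in particular for some $m > 0$, which is exactly (iii). Combined with the equivalences imported from \cite{BaSo}, the chain (i) $\Rightarrow$ (ii) $\Rightarrow$ (iii) $\Rightarrow \cdots \Rightarrow$ (i) is complete. There is no genuine obstacle to this argument: both new steps are, respectively, a direct citation of the preceding theorem and a trivial specialization of the first clause of Theorem \ref{k-clean thm}. All of the serious work is packaged into Theorem \ref{matroid}, so the corollary itself follows in a few lines.
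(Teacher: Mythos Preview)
Your proposal is correct and matches the paper's approach exactly: the paper does not give a formal proof but simply notes, just before stating the corollary, that Theorem~\ref{matroid} allows one to add condition~(ii) to \cite[Corollary~2.3.]{BaSo}. Your explicit insertion of (i)~$\Rightarrow$~(ii) via Theorem~\ref{matroid} and (ii)~$\Rightarrow$~(iii) via Theorem~\ref{k-clean thm} is precisely the intended (and only reasonable) way to spell this out.
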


Cowsik and Nori in \cite{CoNo} proved that for any homogeneous radical ideal $I$ in the polynomial ring $S$, all the powers of $I$ are Cohen-Macaulay if and only if $I$ is a complete intersection. We call the simplicial complex $\Del$ \textbf{complete intersection} if $I_\Del$ is a complete intersection ideal. Therefore the simplicial complex $\Del$ is a complete intersection if and only if $I^m_\Del$ is Cohen-Macaulay for any $m\in\NN$ (\cite[Theorem 3]{TeYo}). We improve this result in the following. By the fact that if $I^m_\Del$ is Cohen-Macaulay then $I^m_\Del$ is equal to the $m$th symbolic power $I^{(m)}_\Del$ of $I_\Del$ we have

\begin{cor}
Let $\Del$ be a pure simplicial complex and $I=I_\Del\subset S$. Then the following
conditions are equivalent:
\begin{enumerate}[\upshape (i)]
  \item $\Del$ is a complete intersection;
  \item $S/I^m$ is $0$-clean for all integer $m>0$;
  \item $S/I^m$ is clean for some integer $m>0$;
  \item $S/I^m$ is clean for some integer $m\geq 3$;
  \item $S/I^m$ is Cohen-Macaulay for some integer $m\geq 3$;
  \item $S/I^m$ is Cohen-Macaulay for all integer $m>0$.
\end{enumerate}
\end{cor}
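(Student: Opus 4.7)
The plan is to adapt the strategy of the preceding corollary on matroid complexes, bridging ordinary and symbolic powers via the fact recalled just above (Cohen--Macaulayness of $S/I^m_\Del$ forces $I^m_\Del=I^{(m)}_\Del$) and using the Terai--Yoshida theorem \cite[Theorem 3]{TeYo} at the crucial step. I would establish the chain $(i)\Rightarrow(ii)\Rightarrow(iii)\Rightarrow(iv)\Rightarrow(v)\Rightarrow(vi)\Rightarrow(i)$.

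For $(i)\Rightarrow(ii)$: when $I=I_\Del=(m_1,\ldots,m_r)$ is a monomial complete intersection, the squarefree generators $m_i$ have pairwise disjoint supports $F_i$, so $\Del$ decomposes as the join $\partial\Delta_{F_1}*\cdots*\partial\Delta_{F_r}*\Delta_U$ with $U=[n]\setminus\bigsqcup F_i$. A join of uniform matroids and a simplex is a matroid, so $\Del$ is a matroid. Moreover, since $m_1,\ldots,m_r$ is a regular sequence one has $I^k=I^{(k)}$ for every $k\geq 1$ (a standard fact for complete intersections). Applying the preceding corollary to the matroid $\Del$ then gives that $S/I^{(m)}$ is $0$-clean for every $m>0$, and the identification $I^m=I^{(m)}$ transports this to (ii).

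The implications $(ii)\Rightarrow(iii)\Rightarrow(iv)$ are formal: $0$-cleanness implies cleanness by Theorem \ref{k-clean thm}, and a statement valid for all $m>0$ specializes to some $m\geq 3$. For $(iv)\Rightarrow(v)$: cleanness of $S/I^m$ forbids embedded primes and forces sequential Cohen--Macaulayness (the remark preceding Section~4). Since $\Del$ is pure, all minimal primes of $I$---hence of $I^m$---have the same height, so $I^m$ is unmixed; an unmixed sequentially Cohen--Macaulay quotient is Cohen--Macaulay, giving (v).

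The main obstacle is $(v)\Rightarrow(vi)$. Here I would invoke the bridge fact to turn Cohen--Macaulayness of $S/I^m$ (for some $m\geq 3$) into $I^m=I^{(m)}$; then the Terai--Yoshida theorem \cite[Theorem 3]{TeYo}---in its sharp form asserting that $S/I^m_\Del$ Cohen--Macaulay for a single $m\geq 3$ already forces $\Del$ to be a complete intersection---yields (i), after which Cowsik--Nori (equivalently \cite[Theorem 3]{TeYo} again) gives (vi). This one invocation of the strong form of \cite[Theorem 3]{TeYo} is indispensable: neither Cowsik--Nori alone (which requires every power to be Cohen--Macaulay) nor Terai--Trung alone (which yields only the matroid property and the Cohen--Macaulayness of the symbolic, not the ordinary, powers) suffices to bridge the gap. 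Once $(v)\Rightarrow(vi)\Rightarrow(i)$ is in hand, the cycle closes and all six conditions are equivalent.
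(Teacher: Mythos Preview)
Your overall plan matches the paper's (essentially implicit) argument: the paper offers no proof beyond the sentence preceding the corollary, which points to Cowsik--Nori, \cite[Theorem~3]{TeYo}, and the bridge fact $I^m_\Del=I^{(m)}_\Del$ under Cohen--Macaulayness. Your $(i)\Rightarrow(ii)$ via the observation that a complete intersection complex is a join of simplex boundaries (hence a matroid), followed by Theorem~\ref{matroid} and $I^m=I^{(m)}$ for regular sequences, is correct and makes explicit what the paper leaves unsaid; likewise $(iv)\Rightarrow(v)$ and the appeal to \cite{TeYo} for $(v)\Rightarrow(i)\Rightarrow(vi)$ are sound.

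There is, however, a genuine gap at $(iii)\Rightarrow(iv)$. You write that it is formal because ``a statement valid for all $m>0$ specializes to some $m\geq 3$'', but condition~(iii) reads \emph{some} $m>0$, not \emph{all}; your reasoning establishes $(ii)\Rightarrow(iv)$, not $(iii)\Rightarrow(iv)$. In fact the implication fails as literally stated: taking $m=1$, condition~(iii) says only that $\Del$ is shellable, and the path complex $\Del=\langle 12,23,34\rangle$ on $[4]$ is pure and shellable while $I_\Del=(x_1x_3,x_1x_4,x_2x_4)$ has codimension~$2$ and three minimal generators, hence is not a complete intersection. (The same defect already sits in the preceding matroid corollary, where $m=1$ gives only shellability.) Presumably (iii) is meant to read ``for all integers $m>0$'', in which case your chain $(ii)\Rightarrow(iii)\Rightarrow(iv)$ really is formal; but you should flag the discrepancy rather than certify the step with an argument that does not apply to the statement as written.
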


\textbf{Acknowledgments:}
The author would like to express his sincere gratitude to the referee for his/her helpful comments that helped to improve the quality of the manuscript. The work was supported by the research council of the University of Maragheh.

\ \\ \\
Rahim Rahmati-Asghar,\\
Department of Mathematics, Faculty of Basic Sciences,\\
University of Maragheh, P. O. Box 55181-83111, Maragheh, Iran.\\
E-mail:  \email{rahmatiasghar.r@gmail.com}

\end{document}